\newcommand{\esssup}{\mathop\mathrm{ess\,sup}}
\newcommand{\length}{\mathcal{H}^1}
\newcommand{\surf}{\mathcal{H}^{n-1}}
\newcommand{\HD}{\mathop\mathrm{HD}}
\newcommand{\tK}{\widetilde{K}}
\newcommand{\tH}{\widetilde{H}}
\newcommand{\RR}{\mathbb{R}}
\newcommand{\dist}{\mathop\mathrm{dist}}
\newcommand{\Mod}{\mathrm{mod}_n}
\theoremstyle{theorem}
\newtheorem{theorem}{Theorem}[section]
\newtheorem{corollary}[theorem]{Corollary}
\newtheorem{lemma}[theorem]{Lemma}
\newtheoremstyle{defnbreak}
  {}
  {}
  {}
  {}
  {\bfseries}
  {.}
  {\parindent}
  {}
\theoremstyle{defnbreak}
\newtheorem{definition}[theorem]{Definition}
\theoremstyle{remark}
\newtheorem{remark}[theorem]{Remark}
\numberwithin{equation}{section}
\begin{document}
\date{July 31, 2012}
\title{Quasisymmetry and Rectifiability of Quasispheres}
\author{Matthew Badger}
\address{Department of Mathematics\\ Stony Brook University \\ Stony Brook, NY 11794-3651}
\email{badger@math.sunysb.edu}
\author{James T. Gill}
\address{Department of Mathematics and Computer Science\\ Saint Louis University\\ St.~ Louis, MO  63103}
\email{jgill5@slu.edu}
\author{Steffen Rohde}
\address{Department of Mathematics\\ University of Washington\\ Seattle, WA 98195-4350}
\email{rohde@math.washington.edu}
\author{Tatiana Toro}
\address{Department of Mathematics\\ University of Washington\\ Seattle, WA 98195-4350}
\email{toro@math.washington.edu}
\thanks{The authors were partially supported by the following NSF grants: the first author, \#0838212; the second author, \#1004721; the third author, \#0800968 and \#1068105; and the fourth author, \#0856687.}
\subjclass[2010]{Primary 30C65, Secondary 28A75, 30C62}
\keywords{Quasisymmetry, quasisphere, asymptotically conformal, rectifiable, Hausdorff measure, Reifenberg flat, linear approximation property, Jones $\beta$-number, modulus}

\begin{abstract} We obtain Dini conditions that guarantee that an asymptotically conformal quasisphere is rectifiable. In particular, we show that for any $\epsilon>0$ integrability of  $( {\rm ess}\sup_{1-t<|x|<1+t} K_f(x)-1 )^{2-\epsilon} dt/t$
implies that the image of the unit sphere under a global quasiconformal homeomorphism $f$ is rectifiable. We also establish estimates for the weak quasisymmetry constant of a global $K$-quasiconformal map in neighborhoods with maximal dilatation close to 1.
\end{abstract}

\maketitle

\section{Introduction}

A \emph{quasisphere} $f(S^{n-1})$ is the image of the unit sphere $S^{n-1}\subset\RR^n$ under a global quasiconformal mapping $f:\RR^n\rightarrow\RR^n$. In the plane, a quasisphere is a \emph{quasicircle}.
(Look below for the definition of a quasiconformal map.) It is well known that the Hausdorff dimension of a quasisphere can exceed $n-1$. When $n=2$, for example, the von Koch snowflake is a quasicircle with dimension $\log 4/\log 3>1$. In fact, every quasicircle is bi-Lipschitz equivalent to a snowflake-like curve (see Rohde \cite{R}). On the other hand, the Hausdorff dimension of a quasicircle cannot be too large: Smirnov \cite{Sm} proved Astala's conjecture that every $k$-quasicircle ($0\leq k<1$) has dimension at most $1+k^2$. This result was further enhanced by Prause, Tolsa and Uriarte-Tuero \cite{PTU}  who showed that $k$-quasi\-circles have finite $(1+k^2)$-dimensional Hausdorff measure. The picture in higher dimensions is not as complete. A few detailed examples of quasispheres with dimension greater than $n-1$ ($n\geq 3$)  have been described by David and Toro \cite{DT1} and Meyer \cite{M1}, \cite{M2}. Mattila and Vuorinen \cite{MV} have also demonstrated how the maximal dilatation (see (\ref{maxdilatation})) of a quasiconformal map $f$ controls the geometry and size of the quasisphere $f(S^{n-1})$.
More specifically, they showed that if $f$ is $K$-quasiconformal with $K$ near 1,
then $f(S^{n-1})$ satisfies the \emph{linear approximation property} (see \cite{MV}) and this property bounds the dimension of $f(S^{n-1})$.
Mattila and Vuorinen's proof that quasispheres are locally uniformly approximable by hyper\-planes
was recently streamlined by Prause \cite{P}, using the quasisymmetry of $f$. This idea from \cite{P} will play an important role in our analysis below.

In the current article, we seek optimal conditions on $f$ that ensure $f(S^{n-1})$ has finite $(n-1)$-dimensional Hausdorff measure $\surf$. We obtain two such conditions, one expressed in terms of the dilatation of $f$ (Theorem \ref{ThmB}) and one expressed in terms of the quasisymmetry of $f$ (Theorem \ref{ThmA}), and both have sharp exponent. This problem was previously studied in the case $n=2$ by Anderson, Becker and Lesley \cite{ABL} and in all dimensions by Mattila and Vuorinen \cite{MV}. To state these results and the main results of this paper, we require some additional notation.

Let $1\leq K<\infty$. A mapping $f:\Omega\rightarrow\RR^n$ from a domain $\Omega\subset\RR^n$ ($n\geq 2$) is said to be \emph{$K$-quasiconformal} (analytic definition) if $f\in W^{1,n}_\mathrm{loc}(\Omega)$, if $f$ is a homeomorphism onto its image, and if the \emph{maximal dilatation} $K_f(\Omega)$ is bounded by $K$: \begin{equation}\label{maxdilatation}K_f(\Omega)=\max\left\{\esssup_{x\in\Omega}\frac{\|f'(x)\|^n}{|Jf(x)|}, \esssup_{x\in\Omega}\frac{|Jf(x)|}{\ell(f'(x))^n}\right\}\leq K.\end{equation} Here we let $f'$ and $Jf$ denote the Jacobian matrix and Jacobian determinant of $f$, respectively. Also $\|\cdot\|$ denotes the operator norm and $\ell(f'(x))=\min_{\|v\|=1}|f'(x)v|$. For background on quasiconformal maps in higher dimensions, we refer the reader to V\"ais\"al\"a \cite{V} and Heinonen \cite{H}. For $t>0$, set $A_t=\{x\in\RR^n:1-t< |x|< 1+t\}$, the annular neighborhood of $S^{n-1}$ of size $t$. We say that a quasisphere $f(S^{n-1})$ is \emph{asymptotically conformal} if $K_f(A_t)\rightarrow 1$ as $t\rightarrow 0$. It will be convenient to also introduce the notation \begin{equation}\tK_f(\Omega)=K_f(\Omega)-1.\end{equation} Notice that $K_f(A_t)\rightarrow 1$ as $t\rightarrow 0$ if and only if $\tK_f(A_t)\rightarrow 0$ as $t\rightarrow 0$.

Every asymptotically conformal quasisphere $f(S^{n-1})$ has Hausdorff dimension $n-1$; see Remark \ref{asympdim}. This is the best that we can do in general, because there are snowflake-like curves $\Gamma=f(S^1)$ such that $\tK_f(A_t)\rightarrow 0$ as $t\rightarrow 0$ but $\length(\Gamma)=\infty$. The main obstruction to finite Hausdorff measure is that $\tK_f(A_t)$ could converge to $0$ very slowly as $t\rightarrow 0$. Conversely, one expects that a good rate of convergence should guarantee that $\surf(f(S^{n-1}))<\infty$. One would like to determine the threshold for a ``good rate". In \cite{ABL} Anderson, Becker and Lesley proved that (in our notation) if $f:\RR^2\rightarrow\RR^2$ is quasiconformal and $f|_{B(0,1)}$ is conformal, then \begin{equation}\label{ABLeq}\int_0^1 \left(\tK_f(A_t)\right)^2 \frac{dt}{t}<\infty\quad\Longrightarrow\quad \length(f(S^1))<\infty.\end{equation} It is also known that the exponent $2$ in the Dini condition (\ref{ABLeq}) cannot be weakened to $2+\epsilon$ for any $\epsilon>0$ (see \cite{ABL}). In higher dimensions, Mattila and Vuorinen \cite{MV} proved that (in our notation) \begin{equation}\label{MVeq} \int_0^1 \tK_f(A_t)\frac{dt}{t}<\infty\quad\Longrightarrow\quad f|_{S^{n-1}}\text{ is Lipschitz.}\end{equation} Hence, by  a standard property of Lipschitz maps,  the Dini condition in (\ref{MVeq}) also implies that the quasisphere $f(S^{n-1})$ is $(n-1)$-rectifiable and  $\surf(f(S^{n-1}))<\infty$. In fact, something more is true: the Dini condition $(\ref{MVeq})$ implies that $f(S^{n-1})$ is a $C^1$ submanifold of $\RR^n$ (see Chapter 7, \S4 in Reshetnyak \cite{Resh}). For conditions weaker than (\ref{MVeq}) (but also with ``exponent 1") that imply $f|_{S^{n-1}}$ is Lipschitz, see Bishop, Gutlyanski\u\i, Martio and Vuorinen \cite{BGMV} and Gutlyanski\u{\i} and Golberg \cite{GG}. Notice that the Dini condition (\ref{MVeq}) is stronger (harder to satisfy) than the Dini condition (\ref{ABLeq}). The main result of this paper is that a Dini condition with exponent 2 ensures that $\surf(f(S^{n-1}))<\infty$ in dimensions $n\geq 3$, and moreover guarantees the existence of local bi-Lipschitz parameterizations.

\begin{theorem}\label{ThmB} If $f:\RR^n\rightarrow\RR^n$ is quasiconformal and \begin{equation}\label{Kfinite}\int_0^1 \left(\tK_f(A_t) \log\frac{1}{\tK_f(A_t)}\right)^2\frac{dt}{t}<\infty,\end{equation} then the quasisphere $f(S^{n-1})$ admits local $(1+\delta)$-bi-Lipschitz parameterizations, for every $\delta>0$. Thus $f(S^{n-1})$ is $(n-1)$-rectifiable and $\surf(f(S^{n-1}))<\infty$.\end{theorem}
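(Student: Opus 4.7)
The plan is to deduce Theorem~\ref{ThmB} from a quantitative version of Mattila--Vuorinen's linear approximation property: the Dini condition (\ref{Kfinite}) should force the flatness of $f(S^{n-1})$ at scale $\rho$ to decay square-Dini fast, which in turn lets a Reifenberg/David--Toro parametrization theorem produce bi-Lipschitz charts whose constants collapse to $1$ at small scales. The final ``$(1+\delta)$ for every $\delta>0$'' is then automatic from the tail of the square sum.

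First I would control the Jones $\beta$-number of the quasisphere at every point and every small scale in terms of $\tK_f$ on a comparable annulus. Fix $x_0\in S^{n-1}$ and a small radius $\rho$, and set $t\approx\rho$. Prause's observation from \cite{P}, sharpened in the direction promised by Theorem~\ref{ThmA}, gives that $f$ restricted to $A_t$ is weakly quasisymmetric with constant $H(K)$ depending only on $K_f(A_t)$; crucially, as $K\to 1$ the Teichm\"uller--Wittich--Belinski\u\i\ type asymptotics yield $H(K)-1\lesssim \tK\log(1/\tK)$. Combining this with the fact that the round sphere is flat on scale~$\rho$ up to error $O(\rho)$, I expect to obtain
\begin{equation*}
\beta_{f(S^{n-1})}\!\bigl(f(x_0),c\rho\bigr)\ \lesssim\ \tK_f(A_{C\rho})\,\log\frac{1}{\tK_f(A_{C\rho})},
\end{equation*}
uniformly in $x_0\in S^{n-1}$. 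The mechanism is that on $A_t$, $f$ differs from a conformal (hence M\"obius) map by a quantity of this size, and a M\"obius image of $S^{n-1}$ is a round sphere, which on scale $\rho$ is flat up to $O(\rho)$.

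Squaring the $\beta$-estimate and integrating $d\rho/\rho$, the Dini hypothesis (\ref{Kfinite}) translates into the square-Dini (Carleson-type) flatness bound
\begin{equation*}
\int_0^1 \sup_{x_0\in S^{n-1}}\beta_{f(S^{n-1})}\!\bigl(f(x_0),\rho\bigr)^{2}\,\frac{d\rho}{\rho}<\infty.
\end{equation*}
Together with the fact that $f(S^{n-1})$ is already (quantitatively) Reifenberg flat with small constant on sufficiently small scales, because $\tK_f(A_t)\to 0$, this is precisely the input needed for a bi-Lipschitz Reifenberg parametrization (of the kind used in \cite{DT1}). The resulting chart on a ball of radius $r_0$ is $(1+\eta(r_0))$-bi-Lipschitz, where $\eta(r_0)$ is controlled by the tail $\int_0^{Cr_0}\beta^{2}\,d\rho/\rho$, which tends to $0$ as $r_0\to 0$. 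Choosing $r_0$ small enough makes $\eta(r_0)\leq\delta$, giving the desired local $(1+\delta)$-bi-Lipschitz parametrizations and, as a consequence, $(n-1)$-rectifiability and $\surf(f(S^{n-1}))<\infty$ by covering $S^{n-1}$ by finitely many such charts.

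The main obstacle is the first step, i.e.\ producing the $\beta$-number bound with exactly the integrand $\tK\log(1/\tK)$ on the right. Getting an estimate of order $\tK^{\alpha}$ for some $\alpha<1$ would follow from Prause's argument more or less directly, but to retain the sharp exponent $2$ in (\ref{Kfinite}) one needs the genuine $\tK\log(1/\tK)$ modulus asymptotic, which requires honest control of moduli of curve families on $A_t$ and their comparison with the spherical modulus. Once that estimate is established, the conversion into bi-Lipschitz parametrizations via Reifenberg is essentially a black box.
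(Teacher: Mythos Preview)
Your overall architecture matches the paper's: bound the local flatness of $f(S^{n-1})$ in terms of the dilatation data, deduce a square-Dini flatness condition, and then invoke a Reifenberg-type bi-Lipschitz parametrization (the paper uses Toro's theorem from \cite{T}, routed through Theorem~\ref{ThmA}). So the second half of the plan is fine.

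The genuine gap is in the first step. You claim that $f$ restricted to $A_t$ is weakly quasisymmetric with $H-1\lesssim \tK_f(A_t)\log(1/\tK_f(A_t))$, and hence that
\[
\beta_{f(S^{n-1})}\bigl(f(x_0),c\rho\bigr)\ \lesssim\ \tK_f(A_{C\rho})\,\log\frac{1}{\tK_f(A_{C\rho})}
\]
with a \emph{fixed} constant $C$. The global bound (\ref{bound2}), $\tH_f(\RR^n)\lesssim \tK_f(\RR^n)\log(1/\tK_f(\RR^n))$, is known, but it does not localize at a fixed ratio of scales: to bound $\tH_f(B(z,s))$ one needs control of $K_f$ on $B(z,Rs)$, and the paper's Theorem~\ref{2Klemma} shows the required radius ratio is
\[
R\ \sim\ \left(\frac{c}{K-1}\right)^{c/(K-1)},
\]
which blows up as $K\downarrow 1$. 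Thus the ``comparable annulus'' in your estimate cannot have width $C\rho$ with $C$ fixed; its width depends on $\tK_f$ itself. This is exactly the issue the paper singles out after (\ref{bound2}) (``such an estimate does not appear in the literature''), and it is the content of \S3.

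Because of this scale blow-up, converting (\ref{Kfinite}) into the flatness (or $\tH_f$) Dini condition is not a straight substitution: in the paper one must set $s=\phi(t)=t\,(M(t)/c)^{c/M(t)}$ with $M(t)$ a smooth majorant of $\tK_f(A_t)$, and then verify that the Jacobian $\phi'(t)/\phi(t)$ contributes only an integrable term. Your sketch does not account for this change of variables. Also, the ``main obstacle'' you identify is misdiagnosed: the difficulty is not sharpening a $\tK^\alpha$ bound to $\tK\log(1/\tK)$ in a fixed-scale modulus estimate, but the fact that local dilatation controls local quasisymmetry (hence flatness) only after passing to a much smaller scale, with the ratio depending on $\tK$.
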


The main difference between Mattila and Vuorinen's theorem and Theorem \ref{ThmB} is that the former is a statement about the regularity of $f|_{S^{n-1}}$, while the latter is a statement about the regularity of $f(S^{n-1})$.  The logarithmic term in (\ref{Kfinite}) is an artifact from the proof of Theorem 1.1, which occurs when we use the maximal dilatation of the map $f$ to control the weak quasisymmetry constant of $f$ (see (\ref{Hdefn})). We do not know whether this term can be removed, and leave this open for future investigation. Nevertheless, Theorem \ref{ThmB} has the following immediate consequence. If $f:\RR^n\rightarrow\RR^n$ is quasiconformal and \begin{equation}\label{Kcor}\int_0^1 \left(\tK_f(A_t)\right)^{2-\epsilon}\frac{dt}{t}<\infty\quad\text{for some }\epsilon>0,\end{equation} then $f$ satisfies (\ref{Kfinite}), and in particular, the quasisphere $f(S^{n-1})$ satisfies the same conclusions as in Theorem \ref{ThmB}. The exponent 2 in Theorem \ref{ThmB} is the best possible, i.e.\ 2 cannot be replaced with $2+\epsilon$ for any $\epsilon>0$. For example, the construction in David and Toro \cite{DT1} (with the parameters $Z=\RR^{n-1}$ and $\epsilon_j=1/j$) can be used to produce a quasiconformal map $f:\RR^n\rightarrow\RR^n$ such that \begin{equation} \int_0^1 \left(\tK_f(\{x\in\RR^n:|x_n|<t\})\right)^{2+\epsilon} \frac{dt}{t}<\infty\quad\text{for all }\epsilon>0,\end{equation} but for which the associated ``quasiplane" $f(\RR^{n-1})$ is not $(n-1)$-rectifiable and has locally infinite $\surf$ measure.

To prove Theorem \ref{ThmB} we first prove a version where the maximal dilatation in the Dini condition is replaced with the weak quasisymmetry constant. Recall that a topological embedding $f:\Omega\rightarrow \RR^n$ is called \emph{quasisymmetric} if there exists a homeomorphism $\eta:[0,\infty)\rightarrow[0,\infty)$ such that \begin{equation} \label{etadefn} |x-y|\leq t|x-z|\Rightarrow |f(x)-f(y)|\leq \eta(t)|f(x)-f(z)|\quad\text{for all } x,y,z\in\Omega.\end{equation} Every $K$-quasiconformal map $f:\RR^n\rightarrow\RR^n$ is quasisymmetric for some gauge $\eta_{n,K}$ determined by $n$ and $K$; e.g., see Heinonen \cite{H}. Below we only use (\ref{etadefn}) with $t=1$. This leads to the concept of weak quasisymmetry.

Let $1\leq H<\infty$. An embedding $f:\Omega\rightarrow \RR^n$ is \emph{weakly $H$-quasisymmetric} if \begin{equation}\label{Hdefn} H_f(\Omega) = \sup\left\{\frac{|f(x)-f(y)|}{|f(x)-f(z)|}:x,y,z\in \Omega \text{ and } \frac{|x-y|}{|x-z|}\leq 1\right\}\leq H.\end{equation} We call $H_f(\Omega)$ the \emph{weak quasisymmetry constant} of $f$ on $\Omega$. Also set \begin{equation}\tH_f(\Omega)=H_f(\Omega)-1.\end{equation} We will establish the following theorem in \S2.

\begin{theorem}\label{ThmA} If $f:\RR^n\rightarrow\RR^n$ is quasiconformal and \begin{equation}\label{Hfinite} \int_0^1 \sup_{z\in S^{n-1}} \left(\tH_f(B(z,t))\right)^{2}\frac{dt}{t}<\infty,\end{equation} then the quasisphere $f(S^{n-1})$ admits local $(1+\delta)$-bi-Lipschitz parameterizations, for every $\delta>0$. Thus $f(S^{n-1})$ is $(n-1)$-rectifiable and $\surf(f(S^{n-1}))<\infty$. \end{theorem}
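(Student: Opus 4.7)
The plan is to deduce Theorem \ref{ThmA} from a Reifenberg-type parameterization theorem in the spirit of David and Toro \cite{DT1}. Concretely, I would show that for each $z \in S^{n-1}$ and each sufficiently small scale $r>0$, the quasisphere $f(S^{n-1})$ is $\beta(z,r)$-close to a hyperplane in $B(f(z),r)$, where $\beta(z,r)$ is controlled pointwise by $\tH_f(B(z,t))$ for $t$ chosen so that $\mathrm{diam}(f(B(z,t))) \asymp r$. The hypothesis (\ref{Hfinite}) then transports, after a change of scale, to the Carleson-type summability
\begin{equation*}
\sup_{z \in S^{n-1}} \int_0^{r_0} \beta(z,r)^2\, \frac{dr}{r} < \infty,
\end{equation*}
which is exactly the input the Reifenberg machinery requires in order to produce local $(1+\delta)$-bi-Lipschitz parameterizations.

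The technical core is a \emph{near-similarity lemma}: there exists $C = C(n)$ such that, whenever $\tH_f(B(z,t))$ is small, one can find a similarity $A_{z,t}$ of $\RR^n$ with
\begin{equation*}
\sup_{x \in B(z,t/2)} \frac{|f(x) - A_{z,t}(x)|}{\mathrm{diam}(f(B(z,t)))} \leq C\,\tH_f(B(z,t)).
\end{equation*}
I would prove this by applying the weak quasisymmetry inequality at many base triples to show that all secant ratios $|f(x) - f(y)|/|x-y|$ on $B(z,t/2)$ agree to within a factor $1 + O(\tH_f(B(z,t)))$; equivalently, after a suitable affine normalization, the Jacobian of $f$ is nearly a scalar multiple of an orthogonal matrix on $B(z, t/2)$. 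This is the idea of Prause \cite{P}, who used weak quasisymmetry in a related way to recover the Mattila–Vuorinen linear approximation property; upgrading that idea to the \emph{linear-in-$\tH_f$} quantitative form above is the main obstacle in the proof, since weak quasisymmetry is a purely scalar hypothesis on distance ratios and turning it into vectorial closeness to a similarity requires balancing many triples together.

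Once the near-similarity lemma is in hand, the translation to flatness of the image is routine. Since $S^{n-1} \cap B(z,t/2)$ deviates from its tangent hyperplane by at most $C t^2 \leq C t \cdot \mathrm{diam}(B(z,t/2))$, the image of this cap under $A_{z,t}$ lies near an affine $(n-1)$-plane $L_{z,t}$, and the lemma yields $\beta(z,r) \leq C(\tH_f(B(z,t)) + t)$ with $r \asymp \mathrm{diam}(f(B(z,t)))$. Because $f$ is globally quasiconformal, the map $t \mapsto \log r(z,t)$ is bi-Lipschitz on bounded scale ranges by the Hölder regularity of $f$ and $f^{-1}$, so the measure $dt/t$ pushes forward to a measure comparable to $dr/r$, and (\ref{Hfinite}) yields the displayed Carleson bound (the contribution $\int_0^{r_0} t^2\, dr/r$ from the curvature term $+t$ is manifestly finite). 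Fixing $\delta>0$ and choosing $r_0$ small enough that $\beta(z,r)<\delta$ for every $r \leq r_0$ and every $z \in S^{n-1}$, and that the $\ell^2$-tail of the dyadic flatness is at most $\delta^2$, the Reifenberg-flat parameterization theorem of \cite{DT1} produces a $(1+C\delta)$-bi-Lipschitz chart from a flat $(n-1)$-disk onto a neighborhood of $f(z)$ in $f(S^{n-1})$. Rectifiability and $\surf(f(S^{n-1})) < \infty$ follow at once from the bi-Lipschitz character of these charts and compactness of $S^{n-1}$.
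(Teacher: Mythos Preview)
Your overall architecture matches the paper's: bound the local flatness of $f(S^{n-1})$ in terms of $\tH_f$, convert the Dini condition \eqref{Hfinite} via H\"older continuity into a square-function bound on flatness, and feed this into a Reifenberg bi-Lipschitz parameterization theorem (the paper invokes Toro \cite{T}, Theorem~\ref{torotheorem} and Corollary~\ref{torocor}, rather than \cite{DT1}). The change-of-variables step and the curvature correction term $+t$ in your outline correspond exactly to the argument between \eqref{orange2a} and \eqref{orange6}.

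The genuine difference is in the key lemma. You propose a \emph{near-similarity} statement---that $f$ is $O(\tH_f)$-close in $L^\infty$ to a similarity $A_{z,t}$ on $B(z,t/2)$---and correctly flag it as the main obstacle. The paper bypasses this entirely. Its Lemma~\ref{flatlemma} proves the two-sided flatness bound $\theta_{f(L)}(f(0),1/2)\le 20\epsilon$ \emph{directly}, with no intermediate similarity. The trick is: normalize so $f(\pm e_1)=\pm e_1$, then use the polarization identity
\[
\dist(y,e_1^\perp)=\tfrac14\bigl||y-e_1|^2-|y+e_1|^2\bigr|
\]
together with weak quasisymmetry at the two basepoints $\pm e_1$ to pin $f(e_1^\perp\cap B_1)$ inside an $O(\epsilon)$-slab about $e_1^\perp$; the reverse inclusion (points of the plane are near $f(L)$) follows from a short connectivity argument, since $f(L)$ must separate the two half-slabs. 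This is strictly weaker than near-similarity and much cheaper: two applications of the $H$-inequality plus topology, instead of balancing many triples to extract a linear map. Your route would succeed as well (John-type near-isometry results give what you need once you have shown all secant ratios agree to $1+O(\epsilon)$), but it proves more than the flatness estimate actually requires.

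One small caution: the Reifenberg input is the \emph{two-sided} flatness $\theta_\Sigma$ (Hausdorff distance to a plane), not the one-sided Jones $\beta$-number; your $\beta(z,r)$ should be read in that sense, and your near-similarity lemma does deliver it.
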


The proof of Theorem \ref{ThmA} is based on the connection between the quasisymmetry of $f$ near $S^{n-1}$ and the flatness of the set $f(S^{n-1})$ (first described in Prause \cite{P}), and a criterion for existence of local bi-Lipschitz parameterizations from Toro \cite{T}.

The maximal dilatation and weak quasisymmetry constant are related as follows. For any $\Omega\subset\RR^n$ and quasiconformal map $f:\Omega\rightarrow\RR^n$, \begin{equation}\label{KHn} K_f(\Omega) \leq \esssup_{x\in\Omega}\frac{\|f'(x)\|^{n-1}}{\ell(f'(x))^{n-1}}\leq H_f(\Omega)^{n-1}.\end{equation} In particular, $\tK_f(\Omega)\leq C \tH_f(\Omega)$ when $H_f(\Omega)$ is close to 1. Hence \begin{equation}\label{HK} \sup_{z\in S^{n-1}}\tH_f(B(z,t))\rightarrow 0\ \text{ as }\ t\rightarrow 0\quad\Longrightarrow\quad \tK_f(A_t)\rightarrow 0\ \text{ as }\ t\rightarrow 0.\end{equation} The question of whether or not the implication in (\ref{HK}) can be reversed is delicate.
When $n=2$ and $f$ is a quasiconformal map of the plane,  $H_f(\RR^2)\leq \exp(C\tK_f(\RR^2))$ (see Theorem 10.33 in \cite{AVV}). Therefore, \begin{equation}\label{bound1}\tH_f(\RR^2)\leq C\tK_f(\RR^2)\quad\text{when }\tK_f(\RR^2)\ll 1.\end{equation} When $n\geq 3$ and $f$ is a quasiconformal map of space (see Theorem 2.7 in \cite{P}), \begin{equation}\label{bound2}\tH_f(\RR^n)\leq C \tK_f(\RR^n) \log\left(\frac{1}{\tK_f(\RR^n)}\right)\quad\text{when }\tK_f(\RR^n)\ll 1.\end{equation} In order to derive Theorem \ref{ThmB} from Theorem \ref{ThmA}, we need (\ref{bound2}) with $B(z,t)$ in place of $\RR^n$, uniformly for all $z\in S^{n-1}$. Unfortunately, to the best of our knowledge such an estimate does not appear in the literature. Thus, in \S3, we show how to localize (\ref{bound2}). We establish an upper bound on the weak quasisymmetry constant of a global quasiconformal map in neighborhoods with maximal dilatation near 1 (see Theorem \ref{2Klemma}). As a consequence, it follows that (see Corollary 3.2) \begin{equation}\label{KH} \tK_f(A_t)\rightarrow 0\ \text{ as }\ t\rightarrow 0\quad\Longrightarrow\quad \sup_{z\in S^{n-1}}\tH_f(B(z,t))\rightarrow 0\ \text{ as }\ t\rightarrow 0.\end{equation} Thus, combining (\ref{HK}) and (\ref{KH}), we conclude that a quasisphere $f(S^{n-1})$ is asymptotically conformal if and only if $\tH_f(B(z,t))\rightarrow 0$ as $t\rightarrow 0$, uniformly across $z\in S^{n-1}$.

The remainder of this paper is divided into two sections, each aimed at the proof of a Dini condition for rectifiability of $f(S^{n-1})$. First we prove Theorem \ref{ThmA} in \S2. Then we prove Theorem \ref{ThmB} in \S3.

\section{Quasisymmetry and Local Flatness}

The goal of this section is to prove Theorem \ref{ThmA}. Following an idea of Prause \cite{P}, we show that the weak quasisymmetry of $f|_{A_t}$ controls the local flatness of $f(S^{n-1})$ at scales depending on $t$. We then invoke a theorem on the existence of local bi-Lipschitz parameterizations from Toro \cite{T}.

Let $\Sigma\subset\RR^n$ $(n\geq 2$) be a closed set. The \emph{local flatness} $\theta_\Sigma(x,r)$ of $\Sigma$ near $x\in \Sigma$ at scale $r>0$ is defined by \begin{equation}\label{thetaapprox}\theta_\Sigma(x,r)=\frac{1}{r}\min_{L\in G(n,n-1)} \HD[\Sigma\cap B(x,r), (x+L)\cap B(x,r)],\end{equation} where $G(n,n-1)$ denotes the collection of $(n-1)$-dimensional subspaces of $\RR^n$ (i.e.\ hyperplanes through the origin) and $\HD[A,B]$ denotes the Hausdorff distance between nonempty, bounded subsets $A,B\subset\RR^n$, \begin{equation}\HD[A,B]=\max\left\{\sup_{x\in A}\,\dist(x,B),\ \sup_{y\in B}\, \dist(y,A)\right\}.\end{equation} Thus local flatness is a gauge of how well a set can be approximated by a hyperplane. Notice that $\theta_{\Sigma}(x,r)$ measures the distance of points in the set to a plane \emph{and} the distance of points in a plane to the set. (By comparison the Jones $\beta$-numbers \cite{J} and Mattila and Vuorinen's linear approximation property \cite{MV} only measure the distance of points in the set to a plane.) Because $\theta_\Sigma(x,r)\leq 1$ for every closed set $\Sigma$, every location $x\in\Sigma$ and every scale $r>0$, this quantity only carries information when $\theta_\Sigma(x,r)$ is small.

Sets which are uniformly close to a hyperplane at all locations and scales first appeared in Reifenberg's solution of Plateau's problem in arbitrary codimension \cite{Reif}. A closed set $\Sigma\subset\RR^n$ is called \emph{$(\delta,R)$-Reifenberg flat} provided that $\theta_{\Sigma}(x,r)\leq \delta$ for all $x\in\Sigma$ and $0<r\leq R$. Moreover, $\Sigma$ is said to be \emph{Reifenberg flat with vanishing constant} if for every $\delta>0$ there exists a scale $R_\delta>0$ such that $\Sigma$ is $(\delta,R_\delta)$-Reifenberg flat. We now record a rectifiability criterion for locally flat sets, which we need below for the proof of Theorem \ref{ThmA}. For further information about flat sets and parameterizations, see the recent investigation by David and Toro \cite{DT2}.

\begin{theorem}[Toro \cite{T}]\label{torotheorem} Let $n\geq 2$. There exists constants $\delta_0>0$, $\epsilon_0>0$ and $C>1$ depending only on $n$ with the following property. Assume that $0<\delta\leq \delta_0$, $0<\epsilon\leq \epsilon_0$, and $\Sigma\subset\RR^n$ is a $(\delta,R)$-Reifenberg flat set. If $x_0\in \Sigma$, $0<r\leq R$, and \begin{equation} \int_{0}^r\sup_{x\in\Sigma\cap B(x_0,r)}\left(\theta_\Sigma(x,t)\right)^2\frac{dt}{t}<\epsilon^2\end{equation} then there exists a bi-Lipschitz homeomorphism $\tau:\Omega\rightarrow \Sigma\cap B(x,r/64)$ where $\Omega$ is a domain in $\RR^{n-1}$; moreover, $\tau$ and $\tau^{-1}$ have Lipschitz constants at most $1+C(\delta+\epsilon)$.\end{theorem}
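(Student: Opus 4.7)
\textbf{Proof proposal for Theorem \ref{torotheorem}.} The plan is to follow the Reifenberg-type graph construction. Fix $x_0\in\Sigma$ and $r\leq R$. At each dyadic scale $r_k:=2^{-k}r$ I would choose a maximal $r_k$-separated net $\{x_{k,i}\}_i\subset \Sigma\cap B(x_0,r)$ and, invoking the Reifenberg hypothesis, assign to each $x_{k,i}$ a hyperplane $P_{k,i}\ni x_{k,i}$ satisfying $\HD[\Sigma\cap B(x_{k,i},r_k),P_{k,i}\cap B(x_{k,i},r_k)]\leq \delta r_k$. The first geometric lemma to prove is that, because both planes approximate $\Sigma$ on overlapping balls, the angle between $P_{k,i}$ and a neighbor $P_{k+1,j}$ satisfies
\begin{equation*}
\angle(P_{k,i},P_{k+1,j})\leq C\bigl(\theta_\Sigma(x_{k,i},r_k)+\theta_\Sigma(x_{k+1,j},r_{k+1})\bigr).
\end{equation*}
Write $\theta_k:=\sup_{x\in\Sigma\cap B(x_0,r)}\theta_\Sigma(x,r_k)$; the hypothesis then says $\sum_k \theta_k^2\lesssim \epsilon^2$ (up to discretization of the integral).

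Next, I would pick a reference hyperplane $P_0$ (the one associated with $x_0$ at scale $r$) and construct a sequence of surfaces $\Sigma_k$, each of which is the graph over $P_0$ of a smooth function $\sigma_k:P_0\to P_0^\perp$, via the following inductive step. Given $\Sigma_k$, use a smooth $r_k$-partition of unity $\{\varphi_{k+1,j}\}$ subordinate to a covering by balls of radius $\sim r_{k+1}$ centered at the net points, and define
\begin{equation*}
\pi_{k+1}(y):=\sum_j \varphi_{k+1,j}(y)\,\pi_{P_{k+1,j}}(y),\qquad \Sigma_{k+1}:=\pi_{k+1}(\Sigma_k),
\end{equation*}
where $\pi_P$ is orthogonal projection onto $P$. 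The key estimates, which occupy most of the work, are: (i) $\|\sigma_{k+1}-\sigma_k\|_\infty \leq C\delta r_{k+1}$ and more precisely $\|\sigma_{k+1}-\sigma_k\|_\infty \leq C\theta_{k+1} r_{k+1}$, giving $C^0$ convergence to a limit $\sigma_\infty$ with Lipschitz constant $\leq 1+C\delta$; and (ii) the Jacobian at each step satisfies $\|D\pi_{k+1}|_{\Sigma_k}-\mathrm{Id}\|\lesssim \theta_{k+1}+\theta_k$ with the crucial refinement that the \emph{infinitesimal rotation} at scale $k$ has size $O(\theta_k+\theta_{k+1})$ while its contribution to the length distortion is of order $(\theta_k+\theta_{k+1})^2$, since rotations fix lengths to first order.

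Telescoping (ii) and applying Cauchy–Schwarz shows that the total length distortion along the composition $\tau=\lim (\pi_k\circ\pi_{k-1}\circ\cdots\circ\pi_1)|_{P_0}$ is bounded by $\exp\bigl(C\sum_k(\theta_k+\theta_{k+1})^2\bigr)\leq \exp(C\epsilon^2)\leq 1+C\epsilon$, contributing the $\epsilon$-piece of the bi-Lipschitz constant; the $\delta$-piece comes from the pointwise bound $\|D\pi_{k+1}-\mathrm{Id}\|\lesssim\delta$ at each individual scale, which is needed away from the regime where $\theta_k$ has already decayed. Applying the same argument to $\tau^{-1}$ (built by the same inverse limit) gives the symmetric estimate. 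Finally, I would verify that $\tau(\Omega)=\Sigma\cap B(x_0,r/64)$: the inclusion $\tau(\Omega)\subset\Sigma$ follows because $\Sigma$ is closed and $\Sigma_k\to\tau(\Omega)$ in Hausdorff distance; the reverse inclusion uses that every point of $\Sigma\cap B(x_0,r/64)$ lies within $C\delta r_k$ of $\Sigma_k$, and the factor $1/64$ is the standard buffer that keeps us away from the boundary of the construction.

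The main obstacle is step (ii): establishing that each $\pi_{k+1}$ acts as a near-rotation on $\Sigma_k$, so that length distortion at scale $k$ is second-order rather than first-order in $\theta_k$. This is what turns the merely-square-summable hypothesis $\int \theta^2\,dt/t<\epsilon^2$ into a usable control on the bi-Lipschitz constant; without this quadratic cancellation one would need an $L^1$ Dini condition on $\theta$, which would produce a strictly stronger (and false) theorem. The careful tracking of how orthogonal projections compose, and in particular the second-order Taylor expansion of $\pi_{P}\circ\pi_{P'}$ in the angle between $P$ and $P'$, is the technical heart of the argument.
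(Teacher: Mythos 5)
The paper does not prove this statement; it is cited as a black-box result from Toro \cite{T}, and the only remark the paper offers (Corollary \ref{torocor}) is an immediate reformulation, not a proof. So there is no ``paper's own proof'' to compare against; I can only assess your sketch on its merits as a reconstruction of what Toro's argument must look like.

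Your framework — dyadic nets, approximating hyperplanes from the Reifenberg hypothesis, partitions of unity gluing projections $\pi_{P_{k+1,j}}$, and a limit map $\tau=\lim(\pi_k\circ\cdots\circ\pi_1)|_{P_0}$ — is indeed the standard Reifenberg-iteration skeleton underlying Toro's theorem and its descendants (e.g.\ David--Toro \cite{DT2}). More importantly, you have correctly identified the mechanism that separates the bi-Lipschitz result from the merely bi-H\"older Reifenberg disk theorem: an orthogonal projection between hyperplanes at angle $\theta$ distorts lengths by $1/\cos\theta-1=O(\theta^2)$, not $O(\theta)$, so the per-scale distortion is quadratic in the flatness and the product $\prod_k(1+C\theta_k^2)\leq\exp(C\sum_k\theta_k^2)$ is controlled by the square-integral hypothesis. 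That is precisely what turns $\int\theta^2\,dt/t<\epsilon^2$ into a $1+C\epsilon$ Lipschitz bound, and it is the heart of Toro's proof.

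A few places where the sketch is imprecise and would need to be shored up in a complete argument. First, you deduce that $\sigma_\infty$ has Lipschitz constant $\leq 1+C\delta$ from $C^0$ convergence of the $\sigma_k$'s, but $C^0$ convergence alone gives nothing about derivatives; you need a uniform Lipschitz bound on each $\sigma_k$ (equivalently, a tilt estimate $\|D\sigma_k\|\leq C\delta$ propagated through the iteration, using that each step tilts the tangent plane by $O(\theta_k)$ and $\sup_k\theta_k\leq\delta$), and it is this propagated tilt that accounts for the $\delta$-term in the stated constant $1+C(\delta+\epsilon)$. Second, the Cauchy--Schwarz invocation is off: once the per-step distortion is $1+O(\theta_k^2)$, summability of $\theta_k^2$ controls the product directly — Cauchy--Schwarz would only be relevant if you were stuck with first-order distortions $1+O(\theta_k)$ and trying to trade them against something else. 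Third, your parenthetical that an $L^1$ Dini condition would yield a ``strictly stronger (and false) theorem'' inverts the logic: an $L^1$ hypothesis is harder to satisfy, so the resulting theorem would be strictly \emph{weaker}, and it would be true (in fact it is essentially the Mattila--Vuorinen regime (\ref{MVeq}) in this paper); the point is only that $L^1$ is not \emph{necessary}, and the quadratic cancellation is what lets you get away with $L^2$. None of these affect the overall correctness of your plan, but they are exactly the places where a careless write-up would stall.
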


\begin{corollary}\label{torocor} If $\Sigma\subset\RR^n$ is Reifenberg flat with vanishing constant and \begin{equation}\int_{0}^{t_0} \sup_{x\in\Sigma} \left(\theta_{\Sigma}(x,t)\right)^2\frac{dt}{t}<\infty\quad\text{for some }t_0>0,\end{equation} then $\Sigma$ admits local $(1+\delta)$-bi-Lipschitz parameterizations, for every $\delta>0$. \end{corollary}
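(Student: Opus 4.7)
The plan is to derive the corollary as a direct application of Theorem \ref{torotheorem}, by choosing the Reifenberg constant and the integral threshold jointly small enough to force the bi-Lipschitz constants below $1+\delta$.

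First, fix $\delta>0$ and let $\delta_0,\epsilon_0,C$ be the constants from Theorem \ref{torotheorem}. I would choose $\delta'>0$ and $\epsilon>0$ with $\delta'\leq\delta_0$, $\epsilon\leq\epsilon_0$, and $C(\delta'+\epsilon)\leq\delta$; for instance, $\delta'=\epsilon=\min\{\delta/(2C),\delta_0,\epsilon_0\}$. This choice guarantees that every parameterization produced by Theorem \ref{torotheorem} (for this pair $(\delta',\epsilon)$) automatically has Lipschitz constants at most $1+\delta$.

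Next, I would produce the two scale thresholds needed to invoke Theorem \ref{torotheorem}. Since $\Sigma$ is Reifenberg flat with vanishing constant, there is a scale $R=R_{\delta'}>0$ on which $\Sigma$ is $(\delta',R)$-Reifenberg flat. Since the function $t\mapsto \sup_{x\in\Sigma}(\theta_\Sigma(x,t))^2/t$ is integrable on $(0,t_0)$, absolute continuity of the integral gives an $r_0\in(0,t_0]$ such that
\begin{equation*}
\int_0^{r_0}\sup_{x\in\Sigma}\left(\theta_\Sigma(x,t)\right)^2\,\frac{dt}{t}<\epsilon^2.
\end{equation*}
Because the supremum is taken over all of $\Sigma$, this bound holds with the supremum restricted to $\Sigma\cap B(x_0,r)$ for any $x_0\in\Sigma$ and any $r\leq r_0$, so the integral hypothesis of Theorem \ref{torotheorem} is satisfied uniformly in the choice of center.

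Finally, I would set $r=\min(R,r_0)$. Then for every $x_0\in\Sigma$, Theorem \ref{torotheorem} applied with this $r$ and with the parameters $\delta',\epsilon$ yields a bi-Lipschitz homeomorphism from a domain $\Omega\subset\RR^{n-1}$ onto $\Sigma\cap B(x_0,r/64)$, with Lipschitz constants bounded by $1+C(\delta'+\epsilon)\leq 1+\delta$. Since $x_0$ was arbitrary, this is exactly the desired local $(1+\delta)$-bi-Lipschitz parameterization. There is essentially no obstacle to this argument beyond the two routine verifications (uniform applicability of the integral bound at every center, and the quantitative choice of $\delta',\epsilon$), so the whole proof should occupy only a few lines.
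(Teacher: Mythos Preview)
Your proposal is correct and is exactly the intended argument: the paper states Corollary~\ref{torocor} without proof, as an immediate consequence of Theorem~\ref{torotheorem}, and your derivation (choose $\delta'$ and $\epsilon$ so that $C(\delta'+\epsilon)\leq\delta$, then pick $r\leq\min(R_{\delta'},r_0)$ using vanishing flatness and integrability) is precisely how one fills in that step.
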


The following lemma is based on an observation by Prause \cite{P}, who showed that quasisymmetry bounds the Jones $\beta$-numbers of $f(L)$, where $f$ is quasiconformal and $L$ is a hyperplane. Here we obtain a slightly stronger statement, because we bound the ``two-sided" Hausdorff distance between a set and a hyperplane.

\begin{lemma} \label{flatlemma} Let $U\subset\RR^n$ be an open set which contains the closed ball $\overline{B(0,1)}$. Assume that $f:U\rightarrow \RR^n$ is weakly $H$-quasisymmetric with \begin{equation}\label{apple1} H=1+\epsilon,\quad 0\leq \epsilon \leq 1/20\end{equation} and \begin{equation}\label{apple2} f(\pm e_1)=\pm e_1.\end{equation} If $L=e_1^\perp$ denotes the hyperplane through the origin, orthogonal to the direction $e_1$, then $f(B(0,1))\supset B(f(0),5/6)$ and $\theta_{f(L)}(f(0), 1/2)\leq 20\epsilon$.\end{lemma}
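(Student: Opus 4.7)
The plan is to deduce both conclusions from a small number of direct weak-quasisymmetry computations together with a topological separation argument at the end. Introduce the abbreviations $a_w=|f(w)-e_1|$ and $b_w=|f(w)+e_1|$. Two elementary facts will do the heavy lifting: (i) for any $w\in L$, $|w-e_1|=|w+e_1|=\sqrt{1+|w|^2}$, so weak QS applied at $w$ forces $a_w\leq Hb_w$ and $b_w\leq Ha_w$; (ii) for any $w\in\overline{B(0,1)}$, $|e_1-w|\leq 2=|e_1-(-e_1)|$, so weak QS at $e_1$ (resp.\ $-e_1$) yields $a_w,b_w\leq 2H$. Since $[f(w)]_1=(b_w^2-a_w^2)/4=(b_w-a_w)(b_w+a_w)/4$, and (i) gives $|b_w-a_w|\leq\epsilon\max(a_w,b_w)$ while (ii) gives $b_w+a_w\leq 4H$, these combine to
\[
|[f(w)]_1|\leq 2H^2\epsilon\quad\text{for every $w\in L\cap\overline{B(0,1)}$,}
\]
and in particular $|[f(0)]_1|\leq 2H^2\epsilon$.

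For the containment $f(B(0,1))\supset B(f(0),5/6)$, the triangle inequality gives $a_0+b_0\geq|f(e_1)-f(-e_1)|=2$, so $\max(a_0,b_0)\geq 1$. For any $z\in S^{n-1}$, $|0-z|=1=|0-e_1|=|0-(-e_1)|$, so weak QS at $0$ yields $|f(z)-f(0)|\geq\max(a_0,b_0)/H\geq 1/H$, which exceeds $5/6$ once $\epsilon\leq 1/20$. Thus $f(\partial B(0,1))$ avoids $\overline{B(f(0),5/6)}$, and by invariance of domain $B(f(0),5/6)$ lies in the (unique) component of $\RR^n\setminus f(\partial B(0,1))$ containing $f(0)$, namely $f(B(0,1))$.

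For the flatness estimate I take the candidate hyperplane $P=f(0)+e_1^{\perp}$. The set-to-plane direction is immediate from the first displayed bound: any $q\in f(L)\cap B(f(0),1/2)$ lies in $f(B(0,1))$, hence $q=f(w)$ for some $w\in L\cap B(0,1)$, so $\dist(q,P)=|[q]_1-[f(0)]_1|\leq 4H^2\epsilon$. The plane-to-set direction, which I expect to be the main obstacle, proceeds by a chord argument. Given $p\in P\cap B(f(0),1/2)$, the chord of $B(f(0),5/6)$ through $p$ parallel to $e_1$ has half-length at least $\sqrt{(5/6)^2-(1/2)^2}=2/3$, so its two endpoints lie in the open caps $C^{\pm}:=\{x\in B(f(0),5/6):\pm[x]_1>3\epsilon\}$. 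A second weak-QS computation, using now only the one-sided inequality available at $w\in B^{\pm}:=B(0,1)\cap\{\pm x_1>0\}$ (for $w\in B^-$ one has $|w-e_1|>|w+e_1|$, hence $b_w\leq Ha_w$), gives
\[
[f(w)]_1\leq (H^2-1)a_w^2/4\leq H^2(H^2-1)<3\epsilon\quad\text{for $w\in B^-$,}
\]
and symmetrically $[f(w)]_1>-3\epsilon$ for $w\in B^+$. Combined with $A:=f(L\cap B(0,1))\subset\{|[x]_1|\leq 2H^2\epsilon\}$, this places $C^{\pm}$ entirely inside the respective components $f(B^{\pm})$ of $f(B(0,1))\setminus A$. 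By connectedness, the chord must then meet $A$ at some point $q$; since $q\in A$ and $[p]_1=[f(0)]_1$, $|q-p|=|[q]_1-[p]_1|\leq 4H^2\epsilon$. Combining both directions, $\HD\leq 4H^2\epsilon$, so $\theta_{f(L)}(f(0),1/2)\leq 8H^2\epsilon\leq 20\epsilon$, as required.
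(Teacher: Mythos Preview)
Your argument follows the same strategy as the paper's: use the polarization identity $[f(w)]_1=(b_w^2-a_w^2)/4$ together with weak quasisymmetry applied at $w$ and at $\pm e_1$ to bound $|[f(w)]_1|$ for $w\in L\cap\overline{B(0,1)}$, use weak quasisymmetry at the origin for the ball containment, and finish the plane-to-set direction with a separation/chord argument. Your one-sided estimates on $B^{\pm}$ (giving $[f(w)]_1<3\epsilon$ on $B^{-}$ and symmetrically on $B^{+}$) are a clean, slightly more direct variant of the paper's separation step, which relies only on the two-sided bound on $L\cap B(0,1)$ together with topology.

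There is one genuine oversight. In the plane-to-set direction you locate $q\in A$ on the chord with $|q-p|\le 4H^2\epsilon$ and then assert $\HD\le 4H^2\epsilon$. But the Hausdorff distance defining $\theta_{f(L)}(f(0),1/2)$ is taken between $f(L)\cap B(f(0),1/2)$ and $P\cap B(f(0),1/2)$, and your $q$ need not lie in $B(f(0),1/2)$: since $p-f(0)\perp e_1$ and $q-p\parallel e_1$, one only gets $|q-f(0)|^2=|p-f(0)|^2+|q-p|^2$, which can exceed $(1/2)^2$ when $|p-f(0)|$ is near $1/2$. The paper handles exactly this issue by first treating $p$ with $|p-f(0)|<1/2-5\epsilon$ (where the short segment $p\pm 5\epsilon e_1$ stays inside $B(f(0),1/2)$) and then shifting the remaining $p$ radially inward by at most $5\epsilon$, at the cost of a factor~$2$. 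The same fix works for you and yields $\theta\le 16H^2\epsilon$, which is still $\le 20\epsilon$ for $\epsilon\le 1/20$. (A more minor point: the chord endpoints lie on $\partial B(f(0),5/6)$, hence not in your open caps $C^{\pm}$ as defined; but since $\overline{B(f(0),5/6)}\subset f(B(0,1))$ and your one-sided bounds place any point of $f(B(0,1))$ with $\pm[x]_1>3\epsilon$ in $f(B^{\pm})$, the connectedness argument is unaffected.)
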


\begin{proof} Assume that $U\subset\RR^n$ is an open set containing the closed ball $\overline{B_1}$, where $B_1=B(0,1)$. Moreover, assume that $f:U\rightarrow\RR^n$ is a weakly $H$-quasisymmetric map satisfying (\ref{apple1}) and (\ref{apple2}). Let $e_1=(1,0,\dots,0)$ and let $L=e_1^\perp$ denote the $(n-1)$-dimensional plane through the origin that is orthogonal to the direction $e_1$. By the polarization identity, for any $y\in\RR^n$, \begin{equation}\label{apple3} \dist(y,L)=|\langle y,e_1\rangle|=\frac{1}{4}\left||y-e_1|^2-|y+e_1|^2\right|.\end{equation} Let $x\in L\cap B_1$. Since $|x-e_1|=|x+e_1|$ and $f(\pm e_1)=\pm e_1$, by  weak quasisymmetry, \begin{equation}\label{apple4}\frac{1}{1+\epsilon}\leq \frac{|f(x)-e_1|}{|f(x)+e_1|}\leq 1+\epsilon.\end{equation} Combining (\ref{apple3}) and (\ref{apple4}), \begin{equation}\label{apple5}\dist(f(x),L)\leq \frac{1}{4}\left((1+\epsilon)^2-1\right)\max\{|f(x)\pm e_1|^2\}.\end{equation} On the other hand, since $|x\pm e_1|\leq 2=|\pm e_1-(\mp e_1)|$ for every $x\in B_1$, the weak quasisymmetry of $f$ yields \begin{equation}\label{apple6} |f(x)\pm e_1|\leq 2(1+\epsilon).\end{equation} Thus, from (\ref{apple1}), (\ref{apple5}) and (\ref{apple6}), we conclude \begin{equation}\label{apple7}\dist(f(x),L)\leq \left((1+\epsilon)^2-1\right)(1+\epsilon)^2=\epsilon(2+\epsilon)(1+\epsilon)^2\leq 2.5\epsilon.\end{equation} Since the hyperplane $f(0)+L=\langle f(0),e_1\rangle e_1 +L=(\pm \dist(f(0),L))e_1+L$, it follows that \begin{equation}\label{apple8}\dist(f(x),f(0)+L)\leq 5\epsilon\quad\text{for all }x\in L\cap B_1.\end{equation} So far we have bounded the distance of points in the set $f(L\cap B_1)$ to the hyperplane $f(0)+L$. To estimate the local flatness of $f(L)$ near $f(0)$, we also need to bound the distance of points in $f(0)+L$ to the set $f(L)$.

First we claim that $f(B_1)\supset B(f(0),5/6)$. To verify this, suppose that $z\in\RR^n$, $|z|=1$. On one hand,
by weak quasisymmetry, (\ref{apple1}) and (\ref{apple2}), \begin{equation}\label{apple9} |f(z)-f(0)|\geq \frac{1}{1+\epsilon} |f(e_1)-f(0)|\geq \frac{20}{21}|e_1-f(0)|.\end{equation} On the other hand, pick $w\in L$ such that $|f(0)-w|=\dist(f(0),L)$. Then (\ref{apple1}), (\ref{apple7}) and the triangle inequality yield \begin{equation}\label{apple10}|e_1-f(0)|\geq |e_1-w|-|f(0)-w|\geq 1-2.5\epsilon\geq 7/8\end{equation} Together (\ref{apple9}) and (\ref{apple10}) imply $|f(z)-f(0)|\geq 5/6$. Because $|f(z)-f(0)|\geq 5/6$ for all $|z|=1$ and $f$ is homeomorphism, we conclude that $f(B_1)\supset B(f(0),5/6)$. Hence, by (\ref{apple8}), $f(L)\cap B(f(0),5/6)$ is contained in a $5\epsilon$-neighborhood of $f(0)+L$.

Next we consider the sets $S^\pm=\{x\in B(f(0),5/6): \langle x-f(0),\pm e_1\rangle >5\epsilon\}$. Because the hyperplane $L$ divides $B_1$ into two connected components and the map $f$ is a homeomorphism, $f(L)$ divides $f(B_1)$ into two connected components. Hence, in view of (\ref{apple8}), we know that $S^+$ and $S^-$ are contained in different connected components of $B(f(0),5/6)\setminus f(L)$. In particular, every line segment from $\partial S^+$ to $\partial S^-$ intersects $f(L)$. If $v\in B(f(0),1/2)\cap (f(0)+L)$, then $v\pm 5\epsilon e_1\in B(f(0),5/6)$ (since $(1/2)^2+(5\epsilon)^2<5/6$). Thus the line segment $\ell_v$ with endpoints $v\pm 5\epsilon e_1$ in $\partial S^\pm$ necessarily intersects $f(L)$. Since $v$ is the center of $\ell_v$, it follows that \begin{equation}\label{apple11} \dist(v,f(L))\leq 5\epsilon\quad\text{ for all }v\in B(f(0),1/2))\cap (f(0)+L).\end{equation} Moreover, if $v\in B(f(0),1/2-5\epsilon)\cap(f(0)+L)$, then $\ell_v\subset B(f(0), 1/2)$ and the upper bound in (\ref{apple11}) remains valid with $\dist(v,f(L)\cap B(f(0),1/2))$ in place of $\dist(v,f(L))$. On the other hand, since $f(0)+L$ is a hyperplane, for any $$v\in B(f(0),1/2)\cap (f(0)+L)\setminus B(f(0),1/2-5\epsilon)$$ there exists $v'\in B(f(0),1/2-5\epsilon)\cap (f(0)+L)$ such that $|v-v'|\leq 5\epsilon$. Therefore, \begin{equation}\label{apple12} \dist(v,f(L)\cap B(f(0),1/2))\leq 10\epsilon\quad\text{ for all }v\in B(f(0),1/2)\cap (f(0)+L).\end{equation}

To finish, we note that taking the maximum of (\ref{apple8}) and (\ref{apple12}) yields \begin{equation}\HD[f(L)\cap B(f(0),1/2), (f(0)+L)\cap B(f(0),1/2)]\leq 10\epsilon.\end{equation} It immediately follows that $\theta_{f(L)}(f(0),1/2)\leq 20\epsilon$.
\end{proof}

\begin{figure}\includegraphics[width=.58\textwidth]{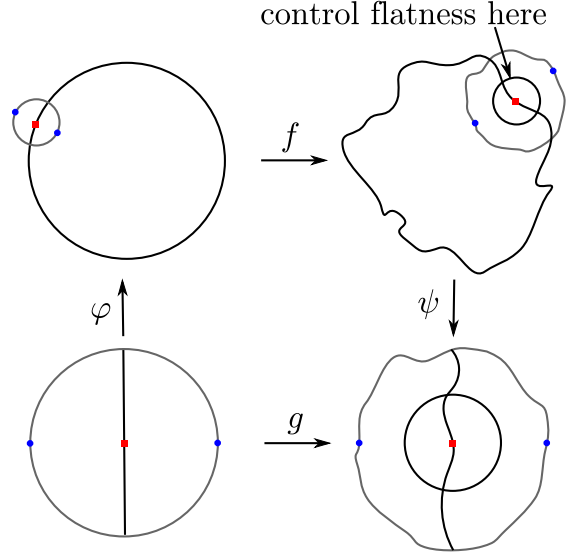}\caption{The relationship between $f$ and $g$}\end{figure}

Let us now illustrate how to use Lemma \ref{flatlemma} with quasispheres. Fix $\epsilon\in[0,1/20]$. Suppose that $f:\RR^n\rightarrow\RR^n$ is a quasiconformal map and suppose that there exists $r>0$ such that $\tH_f(B(z,2r))\leq \epsilon$ for all $z\in S^{n-1}$. Pick $z\in S^{n-1}$ (the red square point in the upper left of Figure 1), let $\vec n_z$ denote a unit normal vector to $S^{n-1}$ at $z$, and assign $e^r_{\pm}=z\pm r\vec n_z$ (the blue round points). Choose any affine transformation $\varphi:\RR^n\rightarrow\RR^n$, which is a composition of a translation, a rotation and a dilation and such that $\varphi(0)=z$ and $\varphi(\pm e_1)=e^r_{\pm}$. Then choose another affine transformation $\psi:\RR^n\rightarrow\RR^n$, which is a composition of a translation, a rotation and a dilation and such that $\psi((f(e_+^r)+f(e_-^r))/2)=0$ and $\psi(f(e^r_\pm))=\pm e_1$. Finally put $g=\psi\circ f\circ\varphi$. Then $g|_{B(0,2)}$ is weakly $H_f(B(z,2r))$-quasisymmetric and $g(\pm e_1)=\pm e_1$.
By Lemma 2.3, $\theta_{g(e_1^{\perp})}(g(0),1/2)\leq 20 \tH_f(B(z,2r))$. Hence, letting $L_z$ denote the tangent plane to $S^{n-1}$ at $z$, this is equivalent to $\theta_{\psi\circ f(L_z)}(\psi(f(z)),1/2)\leq 20 \tH_f(B(z,2r))$. Since the dilation factor of $\psi^{-1}$ is $|f(e^r_+)-f(e^r_-)|/2$, it follows that
\begin{equation}\label{orange1} \theta_{f(L_z)}\left(f(z),\frac{1}{4}|f(e^r_+)-f(e^r_-)|\right)\leq 20 \tH_f(B(z,2r)).\end{equation} Since Lemma \ref{flatlemma} also implies that $g(B(0,1))\supset B(g(0),5/6)$, we similarly get that \begin{equation}\label{orange1b} f(B(z,r))\supset B\left(f(z),\frac{5}{12}|f(e^r_+)-f(e^r_-)|\right).\end{equation}  Notice that since $\tH_f(B(z,2s))\leq \tH_f(B(z,2r))$ for all $0<s\leq r$, a similar argument shows that (\ref{orange1}) and (\ref{orange1b}) hold with $s$ in place of $r$ for all $0<s\leq r$.

We now apply the local H\"{o}lder continuity of the quasiconformal map $f$.  Since $K_f(B(z,r))\leq H_f(B(z,r))^{n-1}\leq (1+\epsilon)^{n-1}$, there is a constant $C>1$ depending only on $n$ (because $0\leq \epsilon\leq 1/20$, e.g.~ see Theorem 11.14 in \cite{Vu-book} for a precise version of the local H\"{o}lder continuity we use here)
\begin{equation}\label{orange2a} |f(x)-f(y)|\leq CM_{z,r}|x-y|^{\alpha}\quad\text{for all } x,y\in B(z,r/2)\end{equation} where $\alpha=(1+\epsilon)^{(n-1)/(1-n)}=(1+\epsilon)^{-1}$ and where $M_{z,r}=\sup_{|x-z|=r}|f(x)-f(z)|$. We also want to find a lower bound on $|f(x)-f(y)|$. First by two applications of weak quasisymmetry \begin{equation}\label{orange2b} |f(e_+^r)-f(e_-^r)|\geq \frac{1}{1+\epsilon} |f(e_+^r)-f(z)|\geq \left(\frac{1}{1+\epsilon}\right)^2M_{z,r}.\end{equation} Combining (\ref{orange1b}) and (\ref{orange2b}), we conclude $f(B(z,r))$ contains $B(f(z),M_{z,r}/3)$. Thus, $K_{f^{-1}}(B(f(z),M_{z,r}/3))\leq K_f(B(z,r))\leq (1+\epsilon)^{n-1}$ and analogously to (\ref{orange2a}) we have that \begin{equation}\label{orange2c} |f^{-1}(\xi)-f^{-1}(\eta)|\leq Cr|\xi-\eta|^{\alpha}\quad\text{for all } \xi,\eta\in B(f(z),M_{z,r}/6).\end{equation} Suppose that we specified $r\leq r_0:=\min_{0\leq \epsilon\leq 1/20}2(1/6C)^{1/\alpha}$, which depends only on $n$. Then $|f(x)-f(z)|\leq M_{z,r}/6$ for all $x\in B(z,r/2)$ by (\ref{orange2a}). Thus, we can apply (\ref{orange2c}) with $x,y\in B(z,r/2)$, $\xi=f(x)$ and $\eta=f(y)$ to get that \begin{equation}\label{orange2d} |x-y| \leq Cr_0|f(x)-f(y)|^\alpha\quad\text{for all }x,y\in B(z,r/2).\end{equation} In particular, for all $0<s\leq r/2$, \begin{equation} t=\frac14|f(e_+^s)-f(e_-^s)| \geq \frac{1}{4}\left(\frac{2s}{Cr_0}\right)^{1/\alpha}=:c(2s)^{1/\alpha}.\end{equation}
Thus, since $2s\leq (t/c)^{\alpha}$ and $\tH_f(B(z,\rho))$ is increasing in $\rho$, we conclude that \begin{equation}\label{orange3} \theta_{f(L_z)}(f(z),t)\leq 20\tH_f(B(z,(t/c)^\alpha))\end{equation} for all $t>0$ such that $(t/c)^{\alpha}\leq r/2$. We remark that by replacing $c$ by $\min_{0\leq \epsilon\leq 1/20} c$, we may assert that $c$ depends only on $n$.

We now want to transfer the estimate (\ref{orange3}) for the local flatness of the image $f(L_z)$ of the tangent plane $L_z$ to an estimate for the quasicircle $f(S^{n-1})$. Evidently \begin{equation}\label{orange4}\begin{split} \theta_{f(S^{n-1})} (f(z),t)\leq &\ 20\tH_f(B(z,(t/c)^\alpha))\\&+\frac{1}{t}\HD[f(S^{n-1})\cap B(f(z),t), f(L_z)\cap B(f(z),t)].\end{split}\end{equation}
Thus our next task is to estimate $\HD[f(S^{n-1})\cap B(f(z),t), f(L_z)\cap B(f(z),t)]$. First we note that by elementary geometry there is an absolute constant $C_0$ so that \begin{equation} \HD[S^{n-1}\cap B(z,\rho), L_z\cap B(z,\rho)]\leq C_0\rho^2\quad\text{for all }0<\rho\leq 1/2.\end{equation}
Thus, by the local H\"older continuity (\ref{orange2a}) and (\ref{orange2d}) and the constraint $0\leq\epsilon\leq 1/20$, there exist constants $c_1>0$ and $C_1>1$ that depend only on $n$ and $M_{z,r}$ so that \begin{equation}\label{orange5} \HD[f(S^{n-1})\cap B(f(z),c_1\rho^{1/\alpha}),f(L_z)\cap B(f(z),c_1\rho^{1/\alpha})]\leq C_1 \rho^{2\alpha}\end{equation} for all $\rho>0$ such that $c_1\rho^{1/\alpha}\leq r/2$. With $t=c_1\rho^{1/\alpha}$, the estimate (\ref{orange5}) becomes \begin{equation} \label{orange6} \HD[f(S^{n-1})\cap B(f(z),t),f(L_z)\cap B(f(z),t)]\leq C_2 t^{2\alpha^2}\end{equation} where $C_2>1$ depends on $n$ and $M_{z,r}$.
Substituting (\ref{orange6}) into (\ref{orange4}), we get that, for all $t>0$ sufficiently small, \begin{equation} \theta_{f(S^{n-1})}(f(z),t)\leq 20\tH_f(B(z,(t/c)^\alpha))+C_2t^{2\alpha^2-1}.\end{equation} Observe that $\beta:=2\alpha^2-1\in(0,\alpha]$ since $0\leq \epsilon\leq 1/20$. Therefore, for all $t>0$ sufficiently small, $\tH_f(B(z,(t/c)^{\alpha})) \leq \tH_f(B(z,(t/c)^\beta))$ and \begin{equation} \theta_{f(S^{n-1})}(f(z),t)\leq 20\tH_f(B(z,(t/c)^\beta))+C_2t^{\beta}.\end{equation} We have outlined the proof of the following theorem.

\begin{theorem}\label{flatwithepsilon} Let $f:\RR^n\rightarrow\RR^n$ be a quasiconformal map. If there exists $r>0$ such that $\tH_f(B(z,2r))\leq \epsilon\leq 1/20$ for all $z\in S^{n-1}$, then there exist constants $c>0$ and $C>1$ depending only on $n$ and $M_r=\sup_{z\in S^{n-1}}\sup_{|x-z|=r}|f(x)-f(z)|$ and a constant $t_0>0$ depending only on $n$, $M_{r}$ and $r$ such that \begin{equation} \label{flatestimate} \sup_{z\in S^{n-1}}\theta_{f(S^{n-1})}(f(z),t)\leq 20\sup_{z\in S^{n-1}}\tH_f(B(z,(t/c)^{\beta}))+Ct^{\beta}\end{equation} for all $0<t\leq t_0$, where $\beta=2\alpha^2-1$, $\alpha=(1+\epsilon)^{-1}$.\end{theorem}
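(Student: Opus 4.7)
The plan is to work locally at each $z \in S^{n-1}$ by conjugating $f$ with affine maps so that Lemma \ref{flatlemma} can be applied directly. Fix $z$, let $\vec n_z$ be a unit normal to $S^{n-1}$ at $z$, and for each $0 < s \leq r$ set $e^s_\pm = z \pm s \vec n_z$. I would choose an affine map $\varphi$ (translation, rotation, dilation) with $\varphi(0) = z$ and $\varphi(\pm e_1) = e^s_\pm$, and an affine map $\psi$ (translation, rotation, dilation) with $\psi\bigl((f(e^s_+)+f(e^s_-))/2\bigr) = 0$ and $\psi(f(e^s_\pm)) = \pm e_1$; then $g = \psi \circ f \circ \varphi$ is weakly $H_f(B(z,2s))$-quasisymmetric on $B(0,2)$ and satisfies $g(\pm e_1) = \pm e_1$. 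Lemma \ref{flatlemma} then yields $\theta_{g(e_1^\perp)}(g(0), 1/2) \leq 20\,\tH_f(B(z,2s))$ together with $g(B(0,1)) \supset B(g(0), 5/6)$, and unwinding $\psi$ and $\varphi$ converts these into the scale-dependent statements (\ref{orange1}) and (\ref{orange1b}) for every $0 < s \leq r$.

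The bookkeeping-heavy heart of the argument is to replace the intrinsic scale $|f(e^s_+) - f(e^s_-)|/4$ by an arbitrary small scale $t$. For this I would invoke local H\"older continuity in both directions. The bound $K_f(B(z,r)) \leq H_f(B(z,r))^{n-1} \leq (1+\epsilon)^{n-1}$ together with the quantitative H\"older estimate (e.g.\ Theorem 11.14 in \cite{Vu-book}) gives
\begin{equation*}
|f(x) - f(y)| \leq C M_{z,r} |x-y|^{\alpha}, \qquad \alpha = (1+\epsilon)^{-1},
\end{equation*}
on $B(z, r/2)$. In the reverse direction, weak quasisymmetry combined with (\ref{orange1b}) shows $f(B(z,s)) \supset B(f(z), M_{z,s}/3)$, so the same H\"older estimate applied to $f^{-1}$ on $B(f(z), M_{z,r}/6)$ yields $|x - y| \leq C r\,|f(x) - f(y)|^{\alpha}$ on $B(z, r/2)$, provided $r \leq r_0(n)$ so that the preimage sits inside the domain of the estimate. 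Setting $t = |f(e^s_+) - f(e^s_-)|/4$, these two H\"older inequalities comparably relate $s$ and $(t/c)^{\alpha}$ with $c = c(n)$, so by monotonicity of $\tH_f$ in the radius,
\begin{equation*}
\theta_{f(L_z)}(f(z), t) \leq 20\,\tH_f\bigl(B(z, (t/c)^{\alpha})\bigr)
\end{equation*}
for all sufficiently small $t$, where $L_z$ is the tangent hyperplane to $S^{n-1}$ at $z$.

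The final step is to pass from $f(L_z)$ to $f(S^{n-1})$ in the flatness estimate. Since $L_z$ is the tangent plane to the unit sphere at $z$, elementary geometry gives the second-order bound $\HD[S^{n-1} \cap B(z, \rho), L_z \cap B(z, \rho)] \leq C_0 \rho^2$ for all small $\rho$. Pushing forward by $f$ at H\"older exponent $\alpha$ yields
\begin{equation*}
\HD\bigl[f(S^{n-1}) \cap B(f(z), c_1\rho^{1/\alpha}),\, f(L_z) \cap B(f(z), c_1\rho^{1/\alpha})\bigr] \leq C_1 \rho^{2\alpha},
\end{equation*}
and the change of variables $t = c_1 \rho^{1/\alpha}$ (with $c_1, C_1$ depending on $n$ and $M_r$) converts this into $C_2 t^{2\alpha^2}$. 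Dividing by $t$ contributes an additive error $C t^{2\alpha^2 - 1} = C t^\beta$ to $\theta_{f(S^{n-1})}(f(z),t)$, and taking the supremum over $z \in S^{n-1}$ yields (\ref{flatestimate}).

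The main obstacle is really the simultaneous use of both H\"older estimates and the need to ensure that $\beta = 2\alpha^2 - 1$ is a strictly positive exponent: under the constraint $\epsilon \leq 1/20$ one has $\alpha \geq 20/21$, hence $\beta \geq 2(400/441) - 1 > 0$, which is precisely the reason for the numerical cutoff $\epsilon \leq 1/20$ in the hypothesis. Everything else — tracking how $c$, $C$, and $t_0$ depend on $n$, $M_r$, and $r$, and verifying that the small-scale constraints in the two H\"older estimates are compatible — is essentially routine once the two-sided H\"older comparison on $B(z, r/2)$ has been established.
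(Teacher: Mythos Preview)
Your proposal is correct and follows essentially the same argument as the paper: conjugate by affine maps to apply Lemma~\ref{flatlemma} at each scale $0<s\leq r$, use the two-sided H\"older estimates (forward on $f$, backward via $f(B(z,r))\supset B(f(z),M_{z,r}/3)$ and H\"older for $f^{-1}$) to convert the intrinsic scale $|f(e^s_+)-f(e^s_-)|/4$ into an arbitrary $t$, and then absorb the discrepancy between $f(L_z)$ and $f(S^{n-1})$ via the second-order tangent approximation pushed forward at exponent $\alpha$. The only step you leave implicit is the final replacement of $(t/c)^{\alpha}$ by $(t/c)^{\beta}$ in the $\tH_f$ term, which is immediate from $\beta\leq\alpha$ and monotonicity of $\tH_f$ in the radius for small $t$.
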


Note that $\beta\uparrow 1$ as $\epsilon\downarrow 0$. Theorem \ref{flatwithepsilon} has several immediate consequences.

\begin{corollary}\label{flattheorem} If $f:\RR^n\rightarrow\RR^n$ is quasiconformal and $\sup_{z\in S^{n-1}}\tH_f(B(z,r))\rightarrow 0$ as $r\rightarrow 0$, then for all $0<\beta<1$ there exist constants $c,t_0>0$ and $C>1$ depending on $f$ and $\beta$ such that (\ref{flatestimate})  holds for all $0<t\leq t_0$.\end{corollary}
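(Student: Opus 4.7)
The plan is to deduce Corollary \ref{flattheorem} directly from Theorem \ref{flatwithepsilon} by observing that for any prescribed $\beta<1$ one can choose the auxiliary parameter $\epsilon$ small enough that the intrinsic exponent $2\alpha^2-1$ produced by that theorem dominates $\beta$. Indeed, $\alpha=(1+\epsilon)^{-1}$ satisfies $\alpha\uparrow 1$ as $\epsilon\downarrow 0$, so the function $\epsilon\mapsto 2(1+\epsilon)^{-2}-1$ is continuous and tends to $1$ as $\epsilon\downarrow 0$. Consequently, given $\beta\in(0,1)$, one can select $\epsilon=\epsilon(\beta)\in(0,1/20]$ such that $\beta\leq \beta':=2(1+\epsilon)^{-2}-1$.

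With $\epsilon$ fixed, the hypothesis $\sup_{z\in S^{n-1}}\tH_f(B(z,r))\rightarrow 0$ as $r\rightarrow 0$ furnishes a radius $r_\epsilon>0$, depending on $f$ and $\beta$, for which $\sup_{z\in S^{n-1}}\tH_f(B(z,2r_\epsilon))\leq \epsilon$. This places us in the hypothesis of Theorem \ref{flatwithepsilon}, applied with $r=r_\epsilon$. The theorem then supplies constants $c>0$, $C>1$, and $t_0>0$, each depending only on $n$, $M_{r_\epsilon}$, and $r_\epsilon$ (so ultimately on $f$ and $\beta$), such that
\begin{equation}\label{plan-pre} \sup_{z\in S^{n-1}}\theta_{f(S^{n-1})}(f(z),t)\leq 20\sup_{z\in S^{n-1}}\tH_f(B(z,(t/c)^{\beta'}))+Ct^{\beta'}\end{equation}
for all $0<t\leq t_0$.

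The final step is to downgrade the exponent from $\beta'$ to the prescribed $\beta$. Shrink $t_0$ if necessary so that $t_0\leq \min\{1,c\}$; then for every $0<t\leq t_0$ one has $t/c\leq 1$ and $t\leq 1$, hence $(t/c)^{\beta'}\leq (t/c)^{\beta}$ and $t^{\beta'}\leq t^{\beta}$. Since $\tH_f(B(z,\rho))$ is monotone nondecreasing in $\rho$, the first inequality gives $\tH_f(B(z,(t/c)^{\beta'}))\leq \tH_f(B(z,(t/c)^{\beta}))$. Substituting these two bounds into (\ref{plan-pre}) yields (\ref{flatestimate}) with exponent $\beta$, and the constants $c$, $C$, $t_0$ depend only on $f$ and $\beta$ as claimed.

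No substantial obstacle is expected: the content of the corollary is essentially a packaging of Theorem \ref{flatwithepsilon} with a careful choice of $\epsilon$, and the only bookkeeping concerns are the two elementary monotonicity comparisons in the last paragraph and the dependence of the constants. The one mild subtlety is that $r_\epsilon$, and therefore $M_{r_\epsilon}$, depend on $\epsilon$ (hence on $\beta$), which is why the constants in Corollary \ref{flattheorem} depend on $f$ and $\beta$ rather than just $f$.
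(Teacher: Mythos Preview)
Your proposal is correct and is exactly the argument the paper has in mind: the corollary is stated as an ``immediate consequence'' of Theorem~\ref{flatwithepsilon}, and the line ``Note that $\beta\uparrow 1$ as $\epsilon\downarrow 0$'' just before it signals precisely the choice of $\epsilon=\epsilon(\beta)$ you make, after which the monotonicity downgrade from $\beta'$ to $\beta$ is routine.
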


\begin{corollary}\label{reifflatthm} Let $0<\delta<1$. If $f:\RR^n\rightarrow\RR^n$ is a quasiconformal map and $\sup_{z\in S^{n-1}}\tH_f(z,r)\leq \delta/40$ for some $r>0$, then $f(S^{n-1})$ is $(\delta,R)$-Reifenberg flat for some $R>0$.\end{corollary}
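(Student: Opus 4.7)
The plan is to derive this as a direct consequence of Theorem \ref{flatwithepsilon}. Set $\epsilon=\delta/40$; since $0<\delta<1$ we have $\epsilon\leq 1/40<1/20$, so the hypothesis of Theorem \ref{flatwithepsilon} (with $r/2$ in place of $r$) is satisfied: $\tH_f(B(z,2\cdot r/2))=\tH_f(B(z,r))\leq \delta/40=\epsilon$ for every $z\in S^{n-1}$.

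Applying Theorem \ref{flatwithepsilon}, I obtain constants $c>0$, $C>1$ and $t_0>0$ (depending on $n$, $M_{r/2}$ and $r$) and an exponent $\beta=2\alpha^2-1$ with $\alpha=(1+\epsilon)^{-1}$, such that
\begin{equation*}
\sup_{z\in S^{n-1}}\theta_{f(S^{n-1})}(f(z),t)\leq 20\sup_{z\in S^{n-1}}\tH_f(B(z,(t/c)^{\beta}))+Ct^{\beta}
\end{equation*}
for all $0<t\leq t_0$. Note $\beta>0$ since $\epsilon<1/20$, so both terms above tend to zero as $t\downarrow 0$.

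Next I would choose $R>0$ small enough to simultaneously satisfy (i) $R\leq t_0$, (ii) $(R/c)^{\beta}\leq r$, and (iii) $CR^{\beta}\leq \delta/2$. Condition (ii) together with monotonicity of $\rho\mapsto \tH_f(B(z,\rho))$ ensures that for $0<t\leq R$,
\begin{equation*}
\tH_f(B(z,(t/c)^{\beta}))\leq \tH_f(B(z,r))\leq \delta/40,
\end{equation*}
so the first term on the right is at most $20\cdot\delta/40=\delta/2$, while condition (iii) bounds the second term by $\delta/2$. Hence for every $z\in S^{n-1}$ and every $0<t\leq R$, $\theta_{f(S^{n-1})}(f(z),t)\leq \delta$. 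Since every point of $f(S^{n-1})$ is of the form $f(z)$ for some $z\in S^{n-1}$, this is exactly the statement that $f(S^{n-1})$ is $(\delta,R)$-Reifenberg flat.

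There is no serious obstacle here: the corollary is a bookkeeping consequence of Theorem \ref{flatwithepsilon}. The only things one must check are that the factor of $40$ in the hypothesis is chosen to absorb the factor of $20$ in (\ref{flatestimate}) with room to spare, and that $\epsilon\leq 1/20$ so that $\beta>0$ and the power term $Ct^{\beta}$ actually decays and can be made smaller than $\delta/2$ by shrinking $R$.
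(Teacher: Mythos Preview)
Your proof is correct and is exactly the argument the paper intends: the corollary is stated as an immediate consequence of Theorem~\ref{flatwithepsilon}, and your bookkeeping (applying the theorem with $r/2$ in place of $r$, then choosing $R$ small enough that $(R/c)^{\beta}\leq r$ and $CR^{\beta}\leq \delta/2$) is precisely how one reads it off from~(\ref{flatestimate}). There is nothing to add.
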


\begin{corollary}\label{reifcor} If $f:\RR^n\rightarrow\RR^n$ is quasiconformal and $\sup_{z\in S^{n-1}}\tH_f(B(z,r))\rightarrow 0$ as $r\rightarrow 0$ , then $f(S^{n-1})$ is Reifenberg flat with vanishing constant. \end{corollary}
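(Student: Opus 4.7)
The plan is to derive this as an essentially immediate consequence of Corollary \ref{reifflatthm}, by inverting quantifiers in the standard way. The definition of Reifenberg flatness with vanishing constant requires that for every $\delta>0$ there exists a scale $R_\delta>0$ such that $f(S^{n-1})$ is $(\delta,R_\delta)$-Reifenberg flat, so the task reduces to producing such an $R_\delta$ for each $\delta>0$.

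First, I would reduce to the case of small $\delta$. Since $(\delta_1,R)$-Reifenberg flatness trivially implies $(\delta_2,R)$-Reifenberg flatness whenever $\delta_2\geq \delta_1$, it suffices to produce $R_\delta$ for all $\delta$ in some interval $(0,\delta^*]$ and then reuse $R_{\delta^*}$ for larger $\delta$. A convenient threshold is $\delta^*=1/2$, so that the resulting bound $\delta/40\leq 1/80$ comfortably satisfies the hypothesis $\epsilon\leq 1/20$ of Theorem \ref{flatwithepsilon} (on which Corollary \ref{reifflatthm} is built).

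Next, given $0<\delta\leq 1/2$, the hypothesis $\sup_{z\in S^{n-1}}\tH_f(B(z,r))\to 0$ as $r\to 0$ provides some $r=r(\delta)>0$ for which
\begin{equation*}
\sup_{z\in S^{n-1}} \tH_f(B(z,r))\leq \delta/40.
\end{equation*}
Applying Corollary \ref{reifflatthm} at this scale $r$ yields a radius $R_\delta>0$ such that $f(S^{n-1})$ is $(\delta,R_\delta)$-Reifenberg flat. Varying $\delta$ produces the required family of scales $\{R_\delta\}_{\delta>0}$, which is exactly the vanishing-constant condition.

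Since the entire argument is just a quantifier swap followed by invocation of the already-established Corollary \ref{reifflatthm}, there is no real obstacle here; the substantive work has all been packaged into the estimate (\ref{flatestimate}) of Theorem \ref{flatwithepsilon}. The only minor care needed is to notice that the constants $c$, $C$, and $t_0$ in Theorem \ref{flatwithepsilon} depend on $r$ and on $M_r$, so the scale $R_\delta$ that one obtains will in general shrink as $\delta\to 0$; this is harmless because the definition of Reifenberg flat with vanishing constant only demands existence of \emph{some} positive $R_\delta$ for each $\delta$, not a uniform one.
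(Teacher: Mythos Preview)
Your argument is correct and matches the paper's treatment: the paper lists Corollary~\ref{reifcor} as one of ``several immediate consequences'' of Theorem~\ref{flatwithepsilon} without giving a separate proof, and your quantifier inversion via Corollary~\ref{reifflatthm} is exactly the intended deduction. (Your restriction to $\delta\leq 1/2$ is slightly more cautious than necessary, since Corollary~\ref{reifflatthm} already covers all $0<\delta<1$ and $\delta\geq 1$ is trivial because $\theta_\Sigma\leq 1$ always; but this is harmless.)
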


\begin{remark}\label{asympdim} Mattila and Vuorinen \cite{MV} demonstrated that sets with the $(\delta,R)$-linear approximation property (see \cite{MV} for the definition) have Hausdorff dimension at most $n-1+C\delta^2$, $C=C(n)>1$. Since $(\delta,R)$-Reifenberg flat sets also have the $(\delta,R)$-linear approximation property, Mattila and Vuorinen's theorem and Corollary \ref{reifflatthm} imply the following bound. If $f:\RR^n\rightarrow\RR^n$ is quasiconformal, then \begin{equation}\label{dimbd} \dim_H f(S^{n-1}) \leq n-1+C\inf_{r>0}\sup_{z\in S^{n-1}} \left(\tH_f(B(z,r))\right)^2.\end{equation} On the other hand, every quasisphere has Hausdorff dimension at least $n-1$. Therefore, (\ref{KH}) and (\ref{dimbd}) imply that every asymptotically conformal quasisphere $f(S^{n-1})$ has Hausdorff dimension $n-1$.\end{remark}

We can now use Corollaries \ref{torocor}, \ref{flattheorem} and \ref{reifcor} to prove Theorem \ref{ThmA}.

\begin{proof}[Proof of Theorem \ref{ThmA}] Assume that $f:\RR^n\rightarrow\RR^n$ is a quasiconformal mapping such that (\ref{Hfinite}) holds. Since the function $\sup_{z\in S^{n-1}} \tH_f(B(z,\cdot))$ is decreasing, (\ref{Hfinite}) implies that $\sup_{z\in S^{n-1}}\tH_f(B(z,t))\rightarrow 0$ as $t\rightarrow 0$. Hence $f(S^{n-1})$ is Reifenberg flat with vanishing constant by Corollary \ref{reifcor}. Pick any $\beta\in[1/2,1)$ and let $t_0, c, C>0$ be the constants from Corollary \ref{flattheorem} such that (\ref{flatestimate}) holds. Since $(a+b)^2\leq 2a^2+2b^2$, \begin{equation}\begin{split}&\int_0^{t_0} \sup_{z\in S^{n-1}}\left(\theta_{f(S^{n-1})}(f(z), t)\right)^2 \frac{dt}{t} \\ &\qquad\leq
 800 \int_0^{t_0} \sup_{z\in S^{n-1}}\left(\tH_f(B(z,(t/c)^\beta))\right)^2 \frac{dt}{t}
+ 2C^2\int_0^{t_0} t^{2\beta}\frac{dt}{t}.\end{split}\end{equation} On one hand, using the change of variables $s=(t/c)^\beta$, $ds/s = (\beta/c)(dt/t)$, \begin{equation}\begin{split} &\int_{0}^{t_0} \sup_{z\in S^{n-1}}\left(\tH_f(B(z,(t/c)^\beta))\right)^2 \frac{dt}{t}\\ &\qquad=\frac{c}{\beta}\int_0^{(t_0/c)^\beta}\sup_{z\in S^{n-1}}\left(\tH_f(B(z,s))\right)^2\frac{ds}{s}<\infty\end{split}\end{equation} by (\ref{Hfinite}). On the other hand, since $\beta\geq 1/2$, $\int_0^{t_0} t^{2\beta-1}dt<\infty$. Therefore, \begin{equation}\int_0^{t_0}\sup_{z\in S^{n-1}}\left(\theta_{f(S^{n-1})}(f(z),t)\right)^2\frac{dt}{t}<\infty,\end{equation} and the quasisphere $f(S^{n-1})$ admits local $(1+\delta)$-bi-Lipschitz parameterizations for every $\delta>0$ by Corollary \ref{torocor}. It follows that $f(S^{n-1})$ is $(n-1)$-rectifiable. Moreover, since $f(S^{n-1})$ is compact, we conclude $\surf(f(S^{n-1}))<\infty$. \end{proof}

\section{Local Bounds on Quasisymmetry}

In this section, our goal is to derive Theorem \ref{ThmB} from Theorem \ref{ThmA}. However, we face a technical challenge. We need to use the maximal dilatation of $f$ near $S^{n-1}$ to bound the weak quasisymmetry constant of $f$ near $S^{n-1}$. Our solution to this puzzle is Theorem \ref{2Klemma}.

\begin{theorem} \label{2Klemma} Given $n\geq 2$ and $1<K\leq \min\{4/3,K'\}$, set \begin{equation}\label{RR} R= \left(\frac{c}{K-1}\right)^{c/(K-1)}\end{equation} where $c>1$ is a constant that only depends on $n$ and $K'$. Assume that $f:\RR^n\rightarrow\RR^n$ is $K'$-quasiconformal. If, in addition, $K_f(B(w,Rs))\leq K$, then
\begin{equation}\tH_f(B(w,s))\leq C(K-1)\log\left(\frac{1}{K-1}\right)\end{equation}
where $C>1$ is an absolute constant.\end{theorem}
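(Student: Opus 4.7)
The plan is to adapt Prause's proof of the global bound $\tH_f(\RR^n)\leq C\tK_f(\RR^n)\log(1/\tK_f(\RR^n))$ (Theorem~2.7 of \cite{P}) to the localized setting. The sharp dilatation bound $K$ is only available on $B(w,Rs)$, so I must argue that the behavior of $f$ outside this ball does not spoil the estimate. The role of the enormous radius $R=(c/(K-1))^{c/(K-1)}$ is to make the contribution of the ``outside'' part negligible compared to the target order $(K-1)\log(1/(K-1))$.

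After rescaling and normalizing, I may assume $w=0$, $s=1$, and $f(0)=0$. Given points $0,y,z\in\overline{B(0,1)}$ with $|y|\leq|z|$, set $\rho=|f(y)|$, $r=|f(z)|$, and $t=\rho/r$; the goal is $t\leq 1+C(K-1)\log(1/(K-1))$. I will work with the conformal capacity of a Teichm\"uller-type condenser $(E,F)$ in the image, where $E$ is a continuum joining $0$ and $f(y)$ and $F$ is one joining $f(z)$ to $\infty$. Its capacity is controlled by the Teichm\"uller function $\tau_n(t)$, and inverting $\tau_n$ near the critical value will translate an upper bound on $\mathrm{cap}(f^{-1}(E),f^{-1}(F))$ into the desired bound on $t$.

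The core technical step is a split capacity inequality of the form
\begin{equation*}
\mathrm{cap}(f^{-1}(E),f^{-1}(F))\leq K\,M_{\mathrm{in}}+K'\,M_{\mathrm{out}},
\end{equation*}
where $M_{\mathrm{in}}$ is the modulus of the curve family joining $f^{-1}(E)$ to $f^{-1}(F)$ inside $B(0,R)$, and $M_{\mathrm{out}}$ accounts for curves that exit $B(0,R)$. By monotonicity of modulus, $M_{\mathrm{out}}$ is dominated by the modulus of the family of curves joining $\overline{B(0,1)}$ to $\RR^n\setminus B(0,R)$, namely $\omega_{n-1}(\log R)^{1-n}$. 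The choice of $R$ in the statement forces $\log R\geq (c/(K-1))\log(c/(K-1))$, so
\begin{equation*}
(\log R)^{1-n}\leq\left(\frac{K-1}{c\log(1/(K-1))}\right)^{n-1},
\end{equation*}
which for every $n\geq 2$ is much smaller than the target rate $(K-1)\log(1/(K-1))$. The contribution $K\,M_{\mathrm{in}}$ reproduces Prause's global estimate, because inside $B(0,R)$ the map is genuinely $K$-quasiconformal and the extremal curves for the Teichm\"uller ring are concentrated near $\overline{B(0,1)}$.

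The main obstacle is constructing the splitting rigorously: one needs an admissible metric (or an equivalent decomposition of the curve family) that isolates curves staying inside $B(0,R)$ from those exiting it, in such a way that the outer error depends only on $n$ and $K'$ and scales as $(\log R)^{1-n}$. This technical step, combined with the quantitative asymptotics of $\tau_n$ near $t=1$ (which produces the logarithmic loss $\log(1/(K-1))$ precisely as in Prause's dimension $n\geq 3$ estimate), is what pins down the value of $c$ in the definition of $R$ and closes the proof.
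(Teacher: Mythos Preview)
Your overall strategy---split the relevant curve family into curves confined to $B(0,R)$ (where the sharp bound $K$ applies) and curves that exit $B(0,R)$ (where only $K'$ is available), bound the latter contribution by the annular modulus $\sigma_{n-1}(\log R)^{1-n}$, and then choose $R$ so large that this error is absorbed---is exactly the approach the paper takes. Two points in your outline, however, are genuine gaps.

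First, the displayed split inequality is misdirected: as you wrote it, both sides are moduli of families in the \emph{domain}, and subadditivity already gives $\mathrm{cap}(f^{-1}(E),f^{-1}(F))\leq M_{\mathrm{in}}+M_{\mathrm{out}}$ with no $K,K'$ at all. The quasiconformality enters on the \emph{image} side: after symmetrizing the image family to a Teichm\"uller ring one obtains
\[
\Mod(f(\Delta)_*) \;\leq\; K\,\Mod(\Gamma_{\mathrm{in}}) + K'\,\Mod(\Gamma_{\mathrm{out}}),
\]
which then becomes, via the Gr\"otzsch function, the bound (\ref{kkgamma}).

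Second, and more seriously, one inequality of this type is not enough to bound the quasisymmetry ratio. It produces only an \emph{upper} estimate on $|g(x)|$ for a standardized map $g$; you also need the matching \emph{lower} estimate, obtained by rerunning the identical argument for $g^{-1}$ (the paper's companion inequality (\ref{kkgammainverse})). For this inverse step the hypothesis $K_f(B(0,R))\leq K$ must be transferred to $K_{f^{-1}}(B(0,R'))\leq K$ for a still-enormous $R'$, which amounts to showing $f(B(0,R))\supset B(0,R^{\alpha'}/M)$. This uses the global H\"older continuity of normalized $K'$-quasiconformal maps (the paper's (\ref{biHolder})), and it is precisely here that the constant $c$ in $R=(c/(K-1))^{c/(K-1)}$ picks up its dependence on $K'$. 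Your outline never addresses the inverse direction, and without it the argument does not close.
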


\begin{corollary}\label{Hasymp} If $f:\RR^n\rightarrow\RR^n$ is quasiconformal and $\tK_f(A_t)\rightarrow 0$ as $t\rightarrow 0$, then $\sup_{z\in S^{n-1}}\tH_f(B(z,t))\rightarrow 0$ as $t\rightarrow 0$.\end{corollary}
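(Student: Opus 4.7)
\medskip

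\noindent\textbf{Proof proposal for Corollary \ref{Hasymp}.} The plan is to apply Theorem \ref{2Klemma} pointwise, with the parameters chosen so that the conclusion of that theorem gives any prescribed smallness of $\tH_f$. Fix $\epsilon>0$. Since $f$ is a global quasiconformal map of $\RR^n$, we may set $K':=K_f(\RR^n)<\infty$; if $K'=1$ the conclusion is trivial, so assume $K'>1$.

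First, I would choose a target dilatation $K=K(\epsilon)>1$. Because the function $(K-1)\log(1/(K-1))$ tends to $0$ as $K\downarrow 1$, we can pick $K$ with $1<K\leq\min\{4/3,K'\}$ so small that the absolute constant $C$ from Theorem \ref{2Klemma} satisfies
\begin{equation*}
C(K-1)\log\!\left(\frac{1}{K-1}\right)<\epsilon.
\end{equation*}
With this $K$ fixed, Theorem \ref{2Klemma} supplies a finite radius factor $R=(c/(K-1))^{c/(K-1)}$ with $c=c(n,K')$. Next I would use the hypothesis $\tK_f(A_t)\to 0$: choose $t_0>0$ small enough that $K_f(A_{t_0})\leq K$, and set $t_1:=t_0/R$.

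Now fix any $z\in S^{n-1}$ and any $0<s\leq t_1$. By the triangle inequality, $|x-z|<Rs$ forces $\bigl||x|-1\bigr|<Rs\leq t_0$, so $B(z,Rs)\subset A_{Rs}\subset A_{t_0}$ and therefore $K_f(B(z,Rs))\leq K_f(A_{t_0})\leq K$. Theorem \ref{2Klemma}, applied with $w=z$, then yields
\begin{equation*}
\tH_f(B(z,s))\leq C(K-1)\log\!\left(\frac{1}{K-1}\right)<\epsilon.
\end{equation*}
Taking the supremum over $z\in S^{n-1}$, we obtain $\sup_{z\in S^{n-1}}\tH_f(B(z,s))<\epsilon$ for all $s\leq t_1$, which proves the corollary since $\epsilon>0$ was arbitrary.

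I do not expect any serious obstacle: the proof is essentially a bookkeeping exercise in which Theorem \ref{2Klemma} does all the analytic work. The only two points that require care are (i) verifying the containment $B(z,Rs)\subset A_{Rs}$ so that the annular hypothesis $\tK_f(A_t)\to 0$ actually delivers the ball-wise dilatation bound needed to invoke Theorem \ref{2Klemma}, and (ii) ordering the choices correctly—first $\epsilon$, then $K$ (hence $R$ and $c$), then $t_0$ from the asymptotic conformality, then $t_1=t_0/R$—so that the radius factor $R$, which blows up as $K\downarrow 1$, is absorbed before one selects the scale at which $\tK_f$ is small.
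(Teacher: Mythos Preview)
Your proof is correct and is exactly the argument the paper intends: the corollary is stated immediately after Theorem \ref{2Klemma} with no separate proof, precisely because it follows by the bookkeeping you carry out (choose $K$ from $\epsilon$, read off $R$, then use $\tK_f(A_t)\to 0$ to find a scale below which $B(z,Rs)\subset A_{t_0}$). Your handling of the edge case $K'=1$ and the ordering of choices is also fine.
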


Before we prove Theorem \ref{2Klemma}, let's use it to establish Theorem \ref{ThmB}.

\begin{proof}[Proof of Theorem \ref{ThmB}] Suppose that $f:\RR^n\rightarrow\RR^n$ is a $K'$-quasiconformal map satisfying (\ref{Kfinite}). We need to verify that $f$ also satisfies the Dini condition (\ref{Hfinite}). Fix $z\in S^{n-1}$. In order to apply Theorem \ref{2Klemma} and perform a change of variables at a certain step below, we choose a majorant $M(t)$ of $\tK_f(A_t)$ on $(0,r_0)$, as follows. If $\tK_f(A_r)=0$ for some $r>0$, then set $r_0=r$ and let $M(t)=t$ for all $0<t<r_0$. Otherwise, let $r_0=1$ and let $M(t)$ any smooth increasing function such that \begin{equation} \tK_f(A_t)\leq M(t)\leq b\tK_f(A_t)\quad\text{for all }0<t<1\end{equation} for some constant $b>1$.  (We note that $M(t)$ and $b$ exist by standard techniques. Also the constant $b$ depends only on $\tK_f(A_1)$ and hence is fully determined by $K'$.) In both cases, $M(t)$ is a smooth increasing function such that $\lim_{t\rightarrow 0+} M(t)=0$, such that $\sup_{z\in S^{n-1}}\tK_f(B(z,t))\leq \tK_f(A_t)\leq M(t)$ for all $0<t<r_0$ and such that \begin{equation}\label{Mdini} \int_0^{r_0} \left(M(t)\log\frac{1}{M(t)}\right)^2\frac{dt}{t}<\infty.\end{equation} By Theorem \ref{2Klemma}, there exist constants $c,C>1$ which depend only on $n$ and $K'$ such that for all $t>0$ satisfying $M(t)\leq\min\{1/3, K'-1\}$, \begin{equation}\label{golf1}
\sup_{z\in S^{n-1}}\tH_f (B(z,\phi(t))) \leq C M(t)\log \left( \frac{1}{M(t)} \right) \end{equation} where
\begin{equation}\label{golf2} \phi(t) = t\left( \frac{M(t)}{c} \right)^{c/M(t)}.\end{equation} Since $M(t)\rightarrow 0$ as $t\rightarrow 0$, there is $t_0\in (0,r_0)$ such that (\ref{golf1}) holds for all $0<t\leq t_0$. Write $\psi(t)=M(t)/c$, so that $\phi(t)=t\exp\left(\frac{1}{\psi(t)}\log\psi(t)\right)$. Then \begin{equation}\label{golf5a} \phi'(t) = \exp\left(\frac{1}{\psi(t)}\log\psi(t)\right)\left[1+t\frac{\psi'(t)}{\psi(t)^2}\left(1+\log\frac{1}{\psi(t)}\right)\right] \end{equation} and \begin{equation}\label{golf5} \frac{\phi'(t)}{\phi(t)} =\frac{1}{t}+\frac{\psi'(t)}{\psi(t)^2}\left(1+\log\frac{1}{\psi(t)}\right).\end{equation} Notice that since $M(t)$ is increasing and $M(t)\rightarrow 0$ as $t\rightarrow 0$, we have $\phi'(t)>0$ for all $t$ sufficiently small. Thus, we can apply a change of variables $s=\phi(t)$, $ds/s=(\phi'(t)/\phi(t))dt$ to obtain
\begin{equation}\begin{split}\label{golf3} \int_0^{\phi(t_0)} \sup_{z\in S^{n-1}}\left(\tH_f(B(z,s))\right)^2\frac{ds}{s} &= \int_{0}^{t_0} \sup_{z\in S^{n-1}}\left(\tH_f(B(z,\phi(t)))\right)^2 \frac{\phi'(t)}{\phi(t)}dt\\ &\leq C^2\int_0^{t_0} \left( M(t) \log \frac{1}{M(t)} \right)^2 \frac{\phi'(t)}{\phi(t)} dt. \end{split} \end{equation}  To establish (\ref{Hfinite}), it remains to show that the integral on the right hand side of (\ref{golf3}) is finite. Using (\ref{golf5}) the integral on the right hand side of (\ref{golf3}) is equal to \begin{equation}\label{golf6} \int_0^{t_0} \left( M(t) \log \frac{1}{M(t)} \right)^2\frac{dt}{t} +c^2\int_0^{t_0}\left(\log\frac{1}{c\psi(t)}\right)^2\left(1+\log\frac{1}{\psi(t)}\right)\psi'(t)dt.\end{equation} The first term on the right hand side of (\ref{golf6}) is finite by (\ref{Mdini}). And, after changing variables again, the second term on the right hand side of (\ref{golf6}) becomes  \begin{equation}\label{golf7} c^2\int_0^{\psi(t_0)}\left(\log\frac{1}{ct}\right)^2\left(1+
\log\frac{1}{t}\right)dt,\end{equation} which is finite too.  This verifies (\ref{Hfinite}). Therefore, by Theorem \ref{ThmA}, the quasisphere $f(S^{n-1})$ admits local $(1+\delta)$-bi-Lipschitz parameterizations for every $\delta>0$.\end{proof}

It remains to prove Theorem \ref{2Klemma}. Rather than estimate the weak quasisymmetry constant $H_f(B(w,s))$ directly, we shall instead estimate a related extremal problem for standardized quasiconformal maps.

\begin{definition} \ 
\begin{enumerate}
\item[(i)] We say that a quasiconformal map $g:\RR^n\rightarrow\RR^n$ is \emph{standardized} if $g(0)=0$, $g(e_1)=e_1$, and $g(B(0,1))\subset B(0,1)$.
\item[(ii)] Let $f:\RR^n\rightarrow\RR^n$ be any quasiconformal map, let $x\in \RR^n$ and let $r>0$. A quasiconformal map $g:\RR^n\rightarrow\RR^n$ is a \emph{standardization} of $f$ with respect to $B(x,r)$ if there exist affine transformations $\phi$ and $\psi$ of $\RR^n$ such that $\phi(B(0,1))=B(x,r)$ and $g=\psi\circ f\circ \phi$ is standardized.
\end{enumerate} \end{definition}

\begin{remark}[How to Estimate $H_f(B(w,s)$] \label{howto} Let $f:\RR^n\rightarrow\RR^n$ be $K'$-quasiconformal, let $R>1$ and assume that $K_f(B(w,Rs))\leq K$. Suppose that we want to estimate $|f(\tilde x)-f(\tilde z)|/|f(\tilde y)-f(\tilde z)|$ for some $\tilde x,\tilde y,\tilde z\in B(w,s)$ with $|\tilde x-\tilde z|\leq |\tilde y-\tilde z|$. Then, writing $r:=|\tilde y-\tilde z|$, $B(\tilde z,r)\subset B(w,3s)$, $K_f(B(\tilde z,\frac{R}{3}r))\leq K$ and \begin{equation}\begin{split}\label{pecan1} \frac{|f(\tilde x)-f(\tilde z)|}{|f(\tilde y)-f(\tilde z)|} &\leq \max\left\{\frac{|f(x)-f(\tilde z)|}{|f(y)-f(\tilde z)|}:|x-\tilde z|\leq |y-\tilde z|= r \right\}\\
&= \max\left\{\frac{|f(x)-f(\tilde z)|}{|f(y)-f(\tilde z)|}:|x-\tilde z|=|y-\tilde z|=r\right\}.\end{split}\end{equation} Suppose that the maximum in (\ref{pecan1}) is obtained at $x=x^\star$ and $y=y^\star$. This implies that $|f(x^\star)-f(\tilde z)|\geq |f(y)-f(\tilde z)|$ for all $y$ such that $|y-\tilde z|=r$. Let $\phi$ be any affine transformation of $\RR^n$ sending $B(0,1)$ to $B(\tilde z,r)$ with $\phi(0)=\tilde z$ and $\phi(e_1)=x^\star$, and let $\psi$ be any affine transformation  of $\RR^n$ sending $B(f(\tilde z), |f(x^\star)-f(\tilde z)|)$ to $B(0,1)$ with $\psi(f(\tilde z))=0$ and $\psi(f(x^\star))=e_1$. Then $F=\psi\circ f\circ\phi$ is a standardization of $f$ with respect to $B(\tilde z,r)$, $f$ is $K'$-quasiconformal, $K_F(B(0,R/3))\leq K$ and \begin{equation} \frac{|f(\tilde x)-f(\tilde z)|}{|f(\tilde y)-f(\tilde z)|} \leq \frac{|F(e_1)-F(0)|}{|F(\phi^{-1}(y^\star))-F(0)|}=\frac{|F(e_1)|}{|F(\phi^{-1}(y^\star))|}\end{equation} where $|\phi^{-1}(y^\star)|=1$. Let $\mathcal{G}(K',K,R/3)$ denote the collection of all standardized $K'$-quasiconformal maps $g$ such that $K_g(B(0,R/3))\leq K$. Then \begin{equation}\label{pecan2} \frac{|f(\tilde x)-f(\tilde z)|}{|f(\tilde y)-f(\tilde z)|} \leq \max\left\{\frac{|g(x)|}{|g(y)|}:g\in\mathcal{G}(K',K,R/3),\,|x|=|y|=1\right\}.\end{equation}
Therefore, since the right hand side of (\ref{pecan2}) is independent of $\tilde x$, $\tilde y$ and $\tilde z$, \begin{equation}\label{pecan3} H_f(B(w,s))\leq \max\left\{\frac{|g(x)|}{|g(y)|}:g\in\mathcal{G}(K',K,R/3),\,|x|=|y|=1\right\}\end{equation} for every $K'$-quasiconformal map $f$ of $\RR^n$ such that $K_f(B(w,Rs))\leq K$.\end{remark}

The extremal problem described in Remark \ref{howto} (\ref{pecan3}) has been studied in the special case $K=K'$ by several authors; see Vuorinen \cite{Vu}, Seittenranta \cite{Se}, and most recently, Prause \cite{P}. Our idea to prove Theorem \ref{2Klemma} is to modify the method from \cite{Vu} to incorporate two estimates on the maximal dilatation ($K_g(\RR^n)\leq K'$ \emph{and} $K_g(B(0,R/3))\leq K$). To do this we need to work with the geometric definition of quasiconformal maps.  Recall that according to the geometric definition a map $f:\Omega\rightarrow\Omega'$ between domains $\Omega,\Omega'\subset\RR^n$ is \emph{$K$-quasiconformal} ($1\leq K<\infty$) provided that $f$ is a homeomorphism and the inequalities \begin{equation}\label{gmod} K^{-1}\Mod(\Gamma) \leq \Mod(f\Gamma)\leq K\Mod(\Gamma)\end{equation} hold for every curve family $\Gamma$ in $\Omega$. Here $\Mod(\Gamma)$ refers to the $n$-modulus of $\Gamma$; e.g., see Heinonen \cite{H}. We also define the \emph{maximal dilatation} $K_f(\Omega)$ to be the smallest $K$ such that the inequalities (\ref{gmod}) hold for all $\Gamma$. It is well known that the analytic and geometric definitions of quasiconformal maps  coincide; e.g., see Chapter 4, \S36 of V\"ais\"al\"a \cite{V}.

As a preliminary step towards the proof of Theorem \ref{2Klemma}, we record some facts about the modulus of the \emph{Teichm\"uller ring} $[-e_1,0]\cup[se_1,\infty]$ in $\RR^n$ with $s>0$ and the \emph{Gr\"otzsch ring} $\overline{B(0,1)}\cup [te_1,\infty)$ in $\RR^n$ with $t>1$.
For every pair of disjoint sets $E,F\subset\RR^n$, $(E,F)$ is the family of curves connecting $E$ and $F$ in $\RR^n$.

\begin{lemma}\label{teich} With $n\geq 2$ fixed, assign $\tau_n(s)=\Mod([-e_1,0],[se_1,\infty))$ for all $s>0$, and assign $\gamma_n(t)=\Mod(B(0,1),[te_1,\infty))$ for all $t>1$. The functions $\tau_n$ and $\gamma_n$ are decreasing homeomorphisms onto $(0,\infty)$. Moreover,
\begin{equation}\label{grot} \gamma_n(t)=2^{n-1}\tau_n(t^2-1)\quad\text{for all }t>1\end{equation} and \begin{equation}\label{grotbound} \frac{\sigma_{n-1}}{\left(\log \lambda_n t\right)^{n-1}}\leq \gamma_n(t) \leq \frac{\sigma_{n-1}}{\left(\log t\right)^{n-1}}\quad\text{for all $t>1$}\end{equation} where $\sigma_{n-1}=\mathcal{H}^{n-1}(S^{n-1})$ is the surface area of the unit sphere and $\lambda_n\in[4,2e^{n-1})$ is the Gr\"otzsch constant. For all $A>0$, define the distortion function \begin{equation} \label{phiA} \varphi_{A,n}(r)=\frac{1}{\gamma_n^{-1}(A\gamma_n(1/r))}\quad\text{for all $0<r<1$}.\end{equation} Then $\varphi_{n,A}$ is an increasing homeomorphism from $(0,1)$ to $(0,1)$. \end{lemma}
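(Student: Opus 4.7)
The plan is to assemble Lemma~3.4 almost entirely from the classical theory of ring domains; none of the assertions are new, so the proof will be largely an exercise in invoking standard references (Väisälä \cite{V}, Heinonen \cite{H}, or the handbook \cite{AVV} on special functions of quasiconformal theory) and verifying that the identifications match our notation. I will divide the argument into three blocks: (a) the general modulus-theoretic properties of $\tau_n$ and $\gamma_n$; (b) the Teichmüller--Grötzsch identity \eqref{grot} together with the explicit estimates \eqref{grotbound}; and (c) the properties of $\varphi_{A,n}$.

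For block (a), the plan is to use the fact that the $n$-modulus of a curve family is monotone under inclusion and decreases if one pushes the endpoints of the connecting arcs farther apart. Concretely, if $s < s'$, then every admissible metric for $([-e_1,0],[s'e_1,\infty))$ restricts to an admissible metric for $([-e_1,0],[se_1,\infty))$ after a Möbius relocation, giving $\tau_n(s) \geq \tau_n(s')$; the analogous monotonicity for $\gamma_n$ is even more direct. To see that each is a \emph{strict} decreasing homeomorphism onto $(0,\infty)$, I would combine the standard facts $\tau_n(s) \to 0$ as $s \to \infty$ (the complementary continua escape to infinity) and $\tau_n(s) \to \infty$ as $s \to 0^+$ (the two continua touch), both of which are worked out in \cite[\S11]{V}; continuity is essentially Fuglede-type for modulus and also appears there.

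For block (b), the identity \eqref{grot} is the classical Teichmüller--Grötzsch relation whose standard proof uses the square root map $z \mapsto z^{1/2}$ in the plane and a symmetrization argument to reduce to $n=2$, or in higher dimensions uses the conformal invariance of modulus together with a particular Möbius transformation mapping $\overline{B(0,1)} \cup [te_1,\infty)$ onto a symmetric Teichmüller configuration; I would simply cite \cite[\S11]{V}. The two-sided bound \eqref{grotbound} is equivalent, via \eqref{grot}, to the standard bounds on the Teichmüller capacity and requires the definition of the Grötzsch constant $\lambda_n$, which by definition satisfies $\lambda_2 = 4$ and in general lies in $[4, 2e^{n-1})$ (see \cite[Chapter~7]{AVV} or \cite[Theorem~11.21]{V}). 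The upper bound $\gamma_n(t) \leq \sigma_{n-1}/(\log t)^{n-1}$ is obtained by using the explicit admissible function $\rho(x) = 1/(|x|\log t)$ on the spherical shell $1 \leq |x| \leq t$; the lower bound is the substantive one and is precisely the definition of $\lambda_n$.

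For block (c), given that $\gamma_n$ is a decreasing homeomorphism of $(1,\infty)$ onto $(0,\infty)$, the map $r \mapsto \gamma_n(1/r)$ is an increasing homeomorphism of $(0,1)$ onto $(0,\infty)$. Multiplying by $A > 0$ preserves this, and composing with $\gamma_n^{-1}$ (which reverses direction) yields a decreasing homeomorphism of $(0,1)$ onto $(1,\infty)$. Taking the reciprocal inverts this into an increasing homeomorphism of $(0,1)$ onto $(0,1)$, matching the claim for $\varphi_{A,n}$. The only point that requires any care is to confirm that $A\gamma_n(1/r)$ remains in the range of $\gamma_n$, which it does because $\gamma_n$ is surjective onto $(0,\infty)$. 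I do not expect any genuine obstacle here; the whole lemma is a bookkeeping result, and the only mild risk is matching normalizations between sources (in particular, some references define the modulus with a dimension-dependent constant), which I would resolve by cross-checking \eqref{grotbound} at $n=2$ against the known identity $\gamma_2(t) = 2\pi/\log \Phi(t)$ with $\Phi$ the conformal capacity function.
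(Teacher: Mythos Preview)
Your proposal is correct and matches the paper's approach: the paper's own proof of this lemma is a single sentence, ``We refer the reader to \S7 of Vuorinen \cite{Vu-book},'' so both you and the authors treat the statement as a compilation of classical facts to be cited rather than re-proved. Your sketch is simply a more explicit unpacking of what those references contain, and the arguments you outline (monotonicity of modulus, the M\"obius/symmetrization derivation of \eqref{grot}, the test function $\rho(x)=1/(|x|\log t)$ for the upper bound in \eqref{grotbound}, and the elementary composition argument for $\varphi_{A,n}$) are exactly the standard ones found in \cite{Vu-book}, \cite{V}, and \cite{AVV}.
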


\begin{proof} We refer the reader to \S7 of Vuorinen \cite{Vu-book}.\end{proof}

In the special case $K=K'$, the following calculation appears in slightly different form in Seittenranta \cite{Se} (c.f. Theorem 1.5 and Lemma 3.1 in \cite{Se}) and Prause \cite{P} (c.f. Theorem 2.7 and Theorem 3.1 in \cite{P}).

\begin{lemma}\label{gammabounds} Let $\mathcal{G}_R=\mathcal{G}(K',K,R)$ be the family of standardized $K'$-quasiconformal maps $g:\RR^n\rightarrow\RR^n$ such that $K_g(B(0,R))\leq K$. For all $1<A\leq 16/9$ there exists $t_0\in(0,1)$ (see (\ref{t0})) depending only on $n$ and $A$ such that if the inequalities
\begin{equation}\label{gamma1} \gamma_n\left(\sqrt{1+\frac{1}{|g(x)|}}\right)\leq A\gamma_n\left(\sqrt\frac{1}{|x|}\right)\end{equation}
and
\begin{equation}\label{gamma2}\gamma_n\left(\sqrt{1+\frac{1}{|x|}}\right)\leq A\gamma_n\left(\sqrt\frac{1}{|g(x)|}\right)\end{equation} hold for all $g\in\mathcal{G}_R$ and $|x|=t_0$, then
\begin{equation} \label{plog} \max\left\{\frac{|h(x)|}{|h(y)|}:h\in\mathcal{G}_{R/t_0},\,|x|=|y|=1 \right\}\leq 1 + C(A-1)\log\left(\frac{1}{A-1}\right).\end{equation} for some absolute constant $C>1$.
\end{lemma}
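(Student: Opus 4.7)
The plan is to convert the scale-$1$ quasisymmetry problem for $h\in\mathcal{G}_{R/t_0}$ into the scale-$t_0$ hypotheses \eqref{gamma1}--\eqref{gamma2} for $\mathcal{G}_R$ via standardization of $h$ at the scale $1/t_0$. Given $h\in\mathcal{G}_{R/t_0}$, first select $v^*\in\partial B(0,1/t_0)$ where $|h(\cdot)|$ attains its maximum over $\overline{B(0,1/t_0)}$: the maximum lies on the boundary by a standard homeomorphism maximum principle (if it were attained at an interior image point $p=h(\xi_0)$, a small ball about $p$ would lie in $h(B(0,1/t_0))$, allowing a radial push outward that strictly increases $|p|$). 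Choose rotations $A_0,Q$ of $\RR^n$ with $(1/t_0)A_0 e_1=v^*$ and $Qh(v^*)=|h(v^*)|e_1$, set $\phi(\xi)=(1/t_0)A_0\xi$, $\psi(\eta)=Q\eta/|h(v^*)|$, and put $g=\psi\circ h\circ \phi$. Then $g$ is a standardization of $h$ with respect to $B(0,1/t_0)$ (the inclusion $g(B(0,1))\subset B(0,1)$ follows from the maximizing property of $v^*$), and $K_g(B(0,R))=K_h(B(0,R/t_0))\leq K$, so $g\in\mathcal{G}_R$.

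Applying \eqref{gamma1} and \eqref{gamma2} to this $g$ at arbitrary $\xi$ with $|\xi|=t_0$, and using that $\gamma_n$ is decreasing together with the identity $A\gamma_n(1/r)=\gamma_n(1/\varphi_{A,n}(r))$ built into \eqref{phiA}, one rearranges the two hypotheses into $\sqrt{1+1/|g(\xi)|}\geq 1/\varphi_{A,n}(\sqrt{t_0})$ and $\sqrt{1+1/t_0}\geq 1/\varphi_{A,n}(\sqrt{|g(\xi)|})$. Solving for $|g(\xi)|$ yields
\begin{equation*}
\ell_A(t_0):=\left(\varphi_{A,n}^{-1}\!\left(\sqrt{\tfrac{t_0}{1+t_0}}\right)\right)^{\!2}\leq |g(\xi)|\leq \frac{\varphi_{A,n}(\sqrt{t_0})^{2}}{1-\varphi_{A,n}(\sqrt{t_0})^{2}}=:u_A(t_0).
\end{equation*}
Since $|g(\xi)|=|h(\phi(\xi))|/|h(v^*)|$ and $\phi$ sends $\partial B(0,t_0)$ bijectively onto $\partial B(0,1)$, the same sandwich bounds $|h(\eta)|/|h(v^*)|$ for every $\eta\in S^{n-1}$. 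Dividing two such ratios delivers $|h(x)|/|h(y)|\leq u_A(t_0)/\ell_A(t_0)$ uniformly in $h\in\mathcal{G}_{R/t_0}$ and unit vectors $x,y$.

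The remaining task is to pick $t_0=t_0(A,n)\in(0,1)$ so that $u_A(t_0)/\ell_A(t_0)\leq 1+C(A-1)\log(1/(A-1))$ with an absolute $C$. From the two-sided bound on $\gamma_n$ in Lemma~\ref{teich} one extracts the asymptotic $\varphi_{A,n}(r)\approx r^{A^{-1/(n-1)}}$ as $r\downarrow 0$, whence $u_A(t_0)\approx t_0^{A^{-1/(n-1)}}$ and $\ell_A(t_0)\approx t_0^{A^{1/(n-1)}}$, producing
\begin{equation*}
\log\frac{u_A(t_0)}{\ell_A(t_0)}\approx (A^{1/(n-1)}-A^{-1/(n-1)})\log\tfrac{1}{t_0}\approx\frac{2(A-1)}{n-1}\log\tfrac{1}{t_0}
\end{equation*}
to leading order in $A-1$. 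Selecting $t_0\asymp(A-1)^{(n-1)/2}$ then balances the two factors so that $\log(u_A/\ell_A)\leq C_1(A-1)\log(1/(A-1))$, and exponentiating via $e^t\leq 1+2t$ (valid throughout $1<A\leq 16/9$ after a small absolute restriction on $A$, absorbed into the choice of $t_0$) produces the stated estimate with absolute $C$.

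The hard part is the quantitative asymptotic analysis of $\varphi_{A,n}$: the $\log\lambda_n$ offset in the lower bound of $\gamma_n$ introduces additive lower-order corrections when inverting $\gamma_n$, and these must be propagated carefully through the chain $r\mapsto \gamma_n(1/r)\mapsto A\gamma_n(1/r)\mapsto \gamma_n^{-1}(\cdot)$ to secure simultaneously the clean leading-order rate $(A-1)\log(1/(A-1))$ and the $n$-independence of the final constant $C$.
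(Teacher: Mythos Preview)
Your approach is essentially the paper's: the two-sided sandwich $\ell_A(t_0)\leq |g(\xi)|\leq u_A(t_0)$ obtained from \eqref{gamma1}--\eqref{gamma2} via the distortion function is exactly the paper's $\mathcal{B}(t_0)\leq |g(x)|\leq\mathcal{A}(t_0)$ (note $\varphi_{A,n}^{-1}=\varphi_{1/A,n}$, so your $\ell_A$ equals the paper's $\mathcal{B}$), and the passage from $\mathcal{G}_{R/t_0}$ at radius $1$ to $\mathcal{G}_R$ at radius $t_0$ is the same reduction. Your explicit standardization of $h$ at scale $1/t_0$ is a nice fleshing-out of the one-line set identity the paper asserts without proof.

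Where you diverge is the final estimate on $u_A(t_0)/\ell_A(t_0)$, and here your sketch is incomplete. The paper does not attempt the ``quantitative asymptotic analysis of $\varphi_{A,n}$'' that you flag as the hard part; instead it invokes Lemma~3.1 of Seittenranta, which for the specific choice
\[
t_0=\Bigl(\lambda_n^{2(\alpha-1)}\tfrac{A-1}{A}\Bigr)^{\beta},\qquad \alpha=A^{1/(1-n)},\ \beta=A^{1/(n-1)},
\]
yields $\mathcal{A}(t_0)/\mathcal{B}(t_0)\leq\exp\bigl((4\sqrt{2}+\log\tfrac{1}{A-1})(A^2-1)\bigr)$, and the absolute constant falls out immediately. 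Your heuristic $\varphi_{A,n}(r)\approx r^{A^{-1/(n-1)}}$ drops the $\lambda_n$-corrections, and your suggested $t_0\asymp (A-1)^{(n-1)/2}$ is a different scale from Seittenranta's $t_0\asymp A-1$. The crude two-sided bound \eqref{grotbound} alone only pins down $\varphi_{A,n}(r)$ up to a multiplicative factor $\lambda_n^{O(1)}$, which would leave $u_A/\ell_A$ bounded away from $1$ even as $A\downarrow 1$; so without either the careful propagation you allude to or a direct appeal to Seittenranta's lemma, the claimed $n$-independence of $C$ (and indeed the vanishing of $u_A/\ell_A-1$ as $A\to 1$) is not actually established.
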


\begin{proof}

Let $n$, $K'$, $K$, $R$ and $A$ be given and fix $t_0\in(0,1)$ to be specified later. Suppose that (\ref{gamma1}) and (\ref{gamma2}) hold for all $g\in\mathcal{G}_{R}$ when $|x|=t_0$. We remark that (\ref{gamma1}) and (\ref{gamma2}) make sense, since $0<|g(x)|<1$ when $0<|x|<1$ because $g$ is standardized. Since $\gamma_n$ is strictly decreasing, one can apply $\gamma^{-1}_n$ to both sides of (\ref{gamma1}), invoke the definition of the distortion function (\ref{phiA}), and perform basic manipulations to get
\begin{equation}\label{pumpkin1} |g(x)| \leq \frac{ \varphi_{A,n}^2(\sqrt{t_0})}{1-\varphi_{A,n}^2(\sqrt{t_0})}=:\mathcal{A}(t_0)\quad\text{for all }g\in\mathcal{G}_R.\end{equation} Similarly, after first dividing (\ref{gamma2}) through by $A$, one can apply $\gamma^{-1}_n$, use (\ref{phiA}), and perform basic manipulations to get \begin{equation}\label{pumpkin2} |g(x)| \geq \varphi_{1/A,n}^2 \left( \sqrt{ \frac{t_0}{1+t_0}} \right)=:\mathcal{B}(t_0)\quad\text{for all }g\in\mathcal{G}_R.\end{equation} Combining (\ref{pumpkin1}) and (\ref{pumpkin2}), we get that \begin{equation}\label{pumpkin3} \frac{|g(x)|}{|g(y)|}\leq \frac{\mathcal{A}(t_0)}{\mathcal{B}(t_0)} \quad\text{for all }g\in\mathcal{G}_R\text{ and }|x|=|y|=t_0.\end{equation} Since \begin{equation}\left\{\frac{|h(x)|}{|h(y)|}:h\in\mathcal{G}_{R/t_0},|x|=|y|=1\right\} = \left\{\frac{|g(x)|}{|g(y)|}:g\in\mathcal{G}_R,\,|x|=|y|=t_0\right\},\end{equation}
we conclude that \begin{equation}\label{pumpkin5} \max\left\{\frac{|h(x)|}{|h(y)|}:h\in\mathcal{G}_{R/t_0},\,|x|=|y|=1\right\} \leq \frac{\mathcal{A}(t_0)}{\mathcal{B}(t_0)}.\end{equation}
Ideally one would like to choose $t_0$ which minimizes the right hand side of (\ref{pumpkin5}). Unfortunately, this critical value of $t_0$ cannot be solved for algebraically. Instead, following Vuorinen \cite{Vu}, Seittenranta \cite{Se} and Prause \cite{P} we take \begin{equation}\label{t0} t_0 = \left( \lambda_n^{2(\alpha -1) }\frac{A-1}{A} \right)^{\beta}\quad\text{where }\alpha=A^{1/(1-n)},\ \beta=A^{1/(n-1)}.\end{equation}
By Lemma 3.1 in \cite{Se}, if $1<A\leq 2$, then \begin{equation} \frac{\mathcal{A}(t_0)}{\mathcal{B}(t_0)}\leq \exp\left(\left(4\sqrt{2}+\log\frac{1}{A-1}\right)(A^2-1)\right).\end{equation} If $1< A\leq 16/9$, then $4\sqrt{2} \leq 23 \log(1/(A-1))$ and $(A^2-1)\leq 3(A-1)$. Thus, \begin{equation}\label{pumpkin10}\frac{\mathcal{A}(t_0)}{\mathcal{B}(t_0)} \leq \exp\left(72(A-1)\log\frac{1}{A-1}\right)\end{equation} Finally note $72(A-1)\log(1/(A-1))$ is bounded for $1< A\leq 16/9$ and $e^x\leq 1+e^bx$ when $x\leq b$. Therefore, from (\ref{pumpkin5}) and (\ref{pumpkin10}), it readily follows that
\begin{equation}\label{bound3} \max\left\{\frac{|h(x)|}{|h(y)|}:h\in\mathcal{G}_{R/t_0},\,|x|=|y|=1\right\} \leq  1 + C(A-1)\log\left(\frac{1}{A-1}\right)\end{equation} for some absolute constant $C>1$, whenever $1<A\leq 16/9$. \end{proof}

As a model for Theorem \ref{2Klemma}, let us now verify (\ref{bound2}). Suppose that $f:\RR^n\rightarrow\RR^n$ is a $K'$-quasiconformal map (with $K'$ near 1) and assume $f$ is standardized so that $f(0)=0$ and $f(e_1)=e_1$, and $0<|f(x)|<1$ whenever $0<|x|<1$.  Following Vuorinen \cite{Vu}, we fix a point $x$ with $|x|\in(0,1)$ and consider the following four curve families in $\RR^n$ (see Figure 2): \begin{itemize}
\item $\Delta = ([0,x],[e_1,\infty))$,
\item $\Delta^* = ([0,|x|e_1], [e_1,\infty))$,
\item $f(\Delta) = (f[0,x],f[e_1,\infty))$,
\item $f(\Delta)_* = ([-|f(x)|e_1, 0],[e_1, \infty))$.
\end{itemize}
\begin{figure}
\begin{center}\includegraphics[width=.85\textwidth]{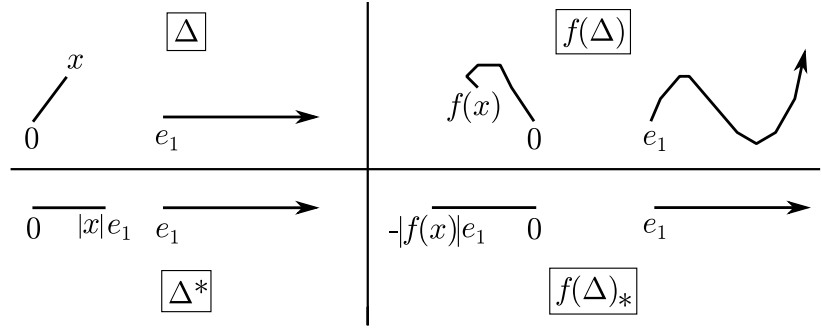}\end{center}
\caption{Families of curves}
\end{figure}
The modulus of these curve families are related by \begin{equation}\label{mark1} \Mod (f(\Delta)_*) \leq \Mod (f(\Delta)) \leq K' \Mod (\Delta) \leq K'  \Mod (\Delta^*),\end{equation} where the first inequality holds by spherical symmetrization (e.g.\ see \S7 in \cite{Vu-book}), the second inequality holds since $f$ is $K'$-quasiconformal, and the third inequality is a lemma of Gehring (see Lemma 5.27 in \cite{Vu-book}).  The rings $[-|f(x)|e_1,0]\cup [e_1,\infty)$ and $[0,|x|e_1]\cup [e_1,\infty)$ used to define $f(\Delta)_*$ and $\Delta^*$, respectively, are conformally equivalent to Teichm\"uller rings by translation and dilation. Thus using the modulus of Teichm\"uller rings we can rewrite (\ref{mark1}) as
\begin{equation}\label{ktau} \tau_n \left( \frac{1}{|f(x)|} \right) \leq K' \tau_n \left( \frac{1}{|x|} -1 \right).\end{equation} Thus, using (\ref{grot}),
\begin{equation}
\label{kgamma}
\gamma_n \left(\sqrt{1+\frac{1}{|f(x)|}} \right) \leq K' \gamma_n \left(\sqrt{\frac{1}{|x|}} \right).
\end{equation} Applying a similar argument with $f^{-1}$ instead of $f$ and $y=f(x)$ instead of $x$ yields
\begin{equation}
\label{kgammainverse}
\gamma_n \left(\sqrt{1+\frac{1}{|x|}} \right) \leq K' \gamma_n \left(\sqrt{\frac{1}{|f(x)|}} \right).
\end{equation}
Notice that $\mathcal{G}=\mathcal{G}(K',K',R)$ is independent of $R>0$. Since (\ref{kgamma}) and (\ref{kgammainverse}) hold for all $f\in\mathcal{G}$ and for all $x$ such that $|x|\in(0,1)$,  Lemma \ref{gammabounds} yields \begin{equation}\max\left\{\frac{|g(x)|}{|g(y)|}:g\in\mathcal{G},\,|x|=|y|=1\right\} \leq 1+C(K'-1)\log\left(\frac{1}{K'-1}\right).\end{equation} Thus, by Remark \ref{howto}, if $f:\RR^n\rightarrow\RR^n$ is $K'$-quasiconformal (with $K'$ near 1), then \begin{equation}\label{mark2} \tH_f(B(w,s))\leq C(K'-1)\log\left(\frac{1}{K'-1}\right) \quad\text{for all }w\in\RR^n\text{ and }s>0.\end{equation} Since the right hand side is independent of $w$ and $s$, (\ref{mark2}) implies (\ref{bound2}).

We will now rerun this argument, with modifications designed to utilize two estimates on the maximal dilatation of $f$.

\begin{proof}[Proof of Theorem \ref{2Klemma}] Let constants $K$ and $K'$ satisfying $1<K\leq\min\{4/3,K'\}$ be given. Fix $R>1$ to be specified later (see (\ref{Rcondition})) and choose $f\in\mathcal{G}(K',K,R)$. Then $f:\RR^n\rightarrow\RR^n$ is a standardized $K'$-quasi\-conformal map and $K_f(B(0,R))\leq K$. Fix $x\in B(0,1)\setminus\{0\}$ and let $\Delta$, $\Delta^*$, $f(\Delta)$ and $f(\Delta)_*$ be the curve families associated to $x$ defined above. Furthermore, decompose $\Delta$ as the union of two curve families, \begin{equation} \Delta=\Gamma_1\cup \Gamma_2,\end{equation} where $\Gamma_1$ consists of all curves in $\Delta$ which remain inside $B(0,R)$ and $\Gamma_2 = \Delta \setminus \Gamma_1$ (see Figure 3).
\begin{figure}\includegraphics[width=.45\textwidth]{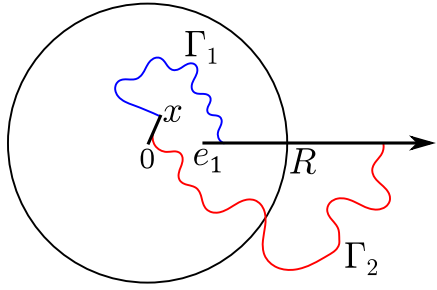}\caption{Curves in $\Gamma_1$ and $\Gamma_2$}\end{figure}
Continuing as above, and using the subadditivity of modulus,
\begin{equation}\label{a1} \Mod(f(\Delta)_*) \leq \Mod(f(\Delta)) \leq \Mod(f(\Gamma_1))+\Mod(f(\Gamma_2)).\end{equation} On one hand, since curves in $\Gamma_1$ lie inside $B(0,R)$, the estimate $K_f(B(0,R))\leq K$ on the maximal dilatation yields \begin{equation}\label{a2} \Mod(f(\Gamma_1)) \leq K \Mod(\Gamma_1)\leq K\Mod(\Delta)\leq K\Mod(\Delta^*).\end{equation} On the other hand, let $\Gamma(r,R)$ be the family of all curves connecting $\partial B(0,r)$ to $\partial B(0,R)$ in $B(0,R)\setminus B(0,r)$. This is one of the few curve families where the modulus is explicitly known (e.g.\ see \cite{Vu-book}): $\rho_n(r,R):=\Mod(\Gamma(r,R))=\sigma_{n-1}(\log R/r)^{1-n}$.  Because every curve in $\Gamma_2$ has a subcurve which belongs to $\Gamma(1,R)$, \begin{equation}\label{a3} \Mod(f(\Gamma_2))\leq K' \Mod(\Gamma_2) \leq K'\Mod(\Gamma(1,R))= K'\rho_n(1,R).\end{equation} Combining (\ref{a1}), (\ref{a2}) and (\ref{a3}) gives \begin{equation}\label{a4} \Mod(f(\Delta)_*)\leq K\Mod(\Delta^*) + K'\rho_n(1,R).\end{equation} Rewriting (\ref{a4}) using the modulus of Gr\"otzsch rings, we get \begin{equation}\label{kkgamma}
\gamma_n \left(\sqrt{1+\frac{1}{|f(x)|}} \right) \leq K \gamma_n \left(\sqrt{\frac{1}{|x|}} \right) + K'\rho_n(1,R).\end{equation} One may view (\ref{kkgamma}) as an analogue of (\ref{kgamma}). We want to find a similar analogue for (\ref{kgammainverse}). We now use the global H\"{o}lder continuity of the quasiconformal map $f$.  Since $f:\RR^n\rightarrow\RR^n$ is $K'$-quasiconformal, $f(0)=0$ and $f(e_1)=e_1$, there exists a constant $M>1$ depending only on $n$ and $K'$ such that \begin{equation}\label{biHolder}M^{-1}\min\{|z|^{\alpha'},|z|^{\beta'}\}\leq |f(z)|\leq M\max\{|z|^{\alpha'},|z|^{\beta'}\}\quad\text{for all }z\in\RR^n\end{equation}
where  $\alpha'=(K')^{1/(1-n)}$ and $\beta'=(K')^{1/(n-1)}$; for this version of H\"{o}lder continuity for normalized quasiconformal maps, see Theorem 1.8(3) in \cite{Vu}. In particular, (\ref{biHolder}) implies that $f(B(0,R))\supset B(0,R^{\alpha'}/M)$. Hence $K_{f^{-1}}(B(0,R^{\alpha'}/M))\leq K$. Thus, if $R>1$ is sufficiently large to ensure $R^{\alpha'}/M>1$, then by arguing as above with $f^{-1}$ instead of $f$ and $y=f(x)$ instead of $x$ we get

\begin{equation}\label{kkgammainverse}
\gamma_n \left(\sqrt{1+\frac{1}{|x|}} \right) \leq K \gamma_n \left(\sqrt{\frac{1}{|f(x)|}} \right) + K'\rho_n(1,R^{\alpha'}/M)\end{equation} for all $x\in B(0,1)\setminus \{0\}$.

Our next task is to choose $R>1$ so large that we can absorb the $\rho_n$-terms in (\ref{kkgamma}) and (\ref{kkgammainverse}) into the $\gamma_n$-terms. Let \begin{equation}\label{t0K2}t_0=\left(\lambda_n^{2(\alpha^2-1)}\frac{K^2-1}{K^2}\right)^{\beta^2}\end{equation} be the constant from Lemma \ref{gammabounds} associated to $A=K^2$, where $\alpha=K^{1/(1-n)}$ and $\beta=K^{1/(n-1)}$. Suppose that we can pick $R>1$ large enough to guarantee \begin{equation}\label{absorb1} K'\rho_n(1,R)\leq K(K-1)\gamma_n \left(\sqrt{\frac{1}{|x|}}\right)\end{equation} and \begin{equation} \label{absorb2} K'\rho_n(1,R^{\alpha'}/M)\leq K(K-1)\gamma_n\left(\sqrt{\frac{1}{|f(x)|}}\right) \end{equation} for all $x$ such that $|x|=t_0$. Then, combining (\ref{kkgamma}), (\ref{kkgammainverse}), (\ref{absorb1}) and (\ref{absorb2}), we see that (\ref{gamma1}) and (\ref{gamma2}) hold with $A=K^2$ for all $f\in\mathcal{G}(K',K,R)$ and for all $x$ such that $|x|=t_0$. Therefore, since $1<K^2\leq 16/9$, Lemma \ref{gammabounds} will imply that \begin{equation}\begin{split} \label{conclusion} \max\left\{\frac{|h(x)|}{|h(y)|}:h\in\mathcal{G}_{R/t_0},\,|x|=|y|=1 \right\}&\leq 1 + C(K^2-1)\log\left(\frac{1}{K^2-1}\right)\\ &\leq 1+3C(K-1)\log\left(\frac{1}{K-1}\right)\end{split}\end{equation} for some absolute constant $C>1$.

Let us now find how large $R>1$ must be to ensure that (\ref{absorb1}) and (\ref{absorb2}) hold. First observe that if $|x|=t_0$ then $\sqrt{1/|f(x)|}\leq (M/t_0^{\beta'})^{1/2}$ by (\ref{biHolder}). Hence, since $\gamma_n$ is decreasing, (\ref{absorb2}) will hold provided that \begin{equation} \label{absorb3} K'\rho_n(1,R^{\alpha'}/M)\leq K(K-1)\gamma_n\left((M/t_0^{\beta'})^{1/2}\right). \end{equation} Using the formula for $\rho_n(1, R^{\alpha'}/M)$ from above and the first inequality in (\ref{grotbound}), we see that (\ref{absorb1}) and (\ref{absorb3}) will hold if
\begin{equation}\label{absorb4a} K'\sigma_{n-1}\left(\log R\right)^{1-n} \leq K(K-1)\sigma_{n-1}\left(\log \lambda_n (1/t_0)^{1/2}\right)^{1-n}\end{equation}
and
\begin{equation}\label{absorb4} K'\sigma_{n-1}\left(\log R^{\alpha'}/M\right)^{1-n}\leq K(K-1)\sigma_{n-1}\left(\log \lambda_n (M/t_0^{\beta'})^{1/2}\right)^{1-n},\end{equation} respectively.
From here an undaunted reader can verify using elementary operations that there is a constant $c>1$ depending only on $n$ and $K'$ so that the inequalities (\ref{absorb4a}) and (\ref{absorb4}) hold whenever \begin{equation}\label{Rcondition} R\geq \left(\frac{c}{K-1}\right)^{c/(K-1)}.\end{equation} For definiteness, let $c>1$ be the smallest constant such that (\ref{Rcondition}) implies (\ref{absorb4}) for all $1<K\leq 4/3$ and set $R=(c/(K-1))^{c/(K-1)}$. From our previous discussion, it follows that (\ref{conclusion}) holds with this choice of $R$. By Remark \ref{howto}, we conclude that $\tH_f(B(w,s))\leq 3C(K-1)\log\left(\frac{1}{K-1}\right)$ for every quasiconformal map $f:\RR^n\rightarrow\RR^n$ such that $K_f(B(w,3(R/t_0)s))\leq K$. Finally observe that \begin{equation} \frac{3R}{t_0} \leq \left(\frac{\tilde c}{K-1}\right)^{\tilde c/(K-1)}=:\widetilde{R}\end{equation} for some constant $\tilde c\geq c$ depending only on $n$ and $K'$. This completes the proof.
\end{proof}

\end{document}